\documentclass[pdflatex,sn-mathphys-num]{sn-jnl}


\usepackage{graphicx}%
\usepackage{multirow}%
\usepackage{amsmath,amssymb,amsfonts}%
\usepackage{amsthm}%
\usepackage{mathrsfs}%
\usepackage[title]{appendix}%
\usepackage{xcolor}%
\usepackage{textcomp}%
\usepackage{manyfoot}%
\usepackage{booktabs}%
\usepackage{algorithm}%
\usepackage{algorithmicx}%
\usepackage{algpseudocode}%
\usepackage{listings}%

\raggedbottom

\usepackage{amsthm}
\usepackage{algorithm}
\usepackage{mathtools}
\usepackage{multirow}
\usepackage{amsmath,  enumerate}
\usepackage{array}
\usepackage{caption}
\usepackage{booktabs}
\usepackage{tabularx}
\newtheorem{theorem}{Theorem}[section] %
\newtheorem{lemma}[theorem]{Lemma} %
\newtheorem{proposition}[theorem]{Proposition} %
\theoremstyle{definition} %
\newtheorem{definition}{Definition}[section] %
\newtheorem{assumption}{Assumption} %
\theoremstyle{remark} %
\newtheorem*{remark}{Remark} %

\usepackage{float}
\usepackage{array}
\usepackage{graphicx,xcolor}
\usepackage{subfigure}
\usepackage{multirow}



\newcommand{\grad}{\mathrm{grad}}
\newcommand{\dist}{\mathrm{dist}}

\newcommand{\norm}[1]{\left\|#1\right\|}

\usepackage{setspace}

\begin{document}

\title[Article Title]{Riemannian conditional gradient methods for composite optimization problems}


\author*[1]{\fnm{Kangming} \sur{Chen}}\email{kangming@amp.i.kyoto-u.ac.jp}

\author[1]{\fnm{Ellen H.} \sur{Fukuda}}\email{ellen@i.kyoto-u.ac.jp}

\affil*[1]{\orgdiv{Graduate School of Informatics}, \orgname{Kyoto University}, \orgaddress{ \city{Kyoto}, \postcode{606-8501},  \country{Japan}}}




\abstract{In this paper, we propose Riemannian conditional gradient methods for minimizing composite functions, i.e., those that can be expressed as the sum of a smooth function and a retraction-based convex function. We analyze the convergence of the proposed algorithms, utilizing three types of step-size strategies: adaptive, diminishing, and those based on the Armijo condition. 
We establish the convergence rate of \(\mathcal{O}(1/k)\) for the adaptive and diminishing step sizes, where \(k\) denotes the number of iterations. Additionally, we derive an iteration complexity of \(\mathcal{O}(1/\epsilon^2)\) for the Armijo step-size strategy to achieve \(\epsilon\)-optimality, where \(\epsilon\) is the optimality tolerance.
Finally, the effectiveness of our algorithms is validated through some numerical experiments performed on the sphere and  Stiefel manifolds.}


\keywords{Conditional gradient methods, Riemannian manifolds, composite optimization problems, Frank-Wolfe methods}



\maketitle

\section{Introduction}

The conditional gradient (CG) method, also known as the Frank-Wolfe algorithm, is a first-order iterative method widely used for solving constrained nonlinear optimization problems. Developed by Frank and Wolfe in 1956~\cite{frank1956algorithm}, the CG method focuses on minimizing a convex function over a convex constraint set, particularly when the constraint set is difficult, in the sense that the projection onto it is computationally demanding.

The convergence rate of  $\mathcal{O}(1/k)$, where $k$ denotes the number of iterations, for the conditional gradient method has been presented in several studies~\cite{bach2015duality, pmlr-v28-jaggi13}. Recent advancements in the field have focused on accelerating the method~\cite{li2021momentum, zhang2021accelerating}. The versatility of this method is also clear, since many applications can be found in the literature (see, for instance, \cite{lacoste2013block, dalmasso2023efficient}). Furthermore, the CG method has recently been extended to multiobjective optimization~\cite{assunccao2021conditional,assunccao2023generalized,GF24}.

Meanwhile, composite optimization problems, where the objective function can be decomposed into the sum of two functions, have become increasingly important in modern optimization theory and applications. Specifically, the objectives of these problems take the form \( F(x) := f(x) + g(x) \), where \( f \) is a smooth function and \( g \) is a possibly non-smooth and convex function. This structure arises naturally in many areas, including machine learning and signal processing, where \( f \) often represents a data fidelity term and \( g \) serves as a regularization term to promote desired properties such as sparsity or low rank~\cite{wright2009sparse, tibshirani1996regression, combettes2005signal}. 

To deal with such problems efficiently, various methods have been developed. Classical approaches include proximal gradient methods~\cite{beck2017first, FM81,parikh2014proximal}, which address the non-smooth term \( g \) using proximal operators, and accelerated variants such as FISTA~\cite{beck2009fast}.  Other methods, such as the alternating direction method of multipliers (ADMM) \cite{boyd2011distributed}, decompose the problem further to enable parallel and distributed computation. For problems with specific structures or manifold constraints, methods like the Riemannian proximal gradient have been proposed in~\cite{chen2020proximal, huang_riemannian_2022}, extending proximal techniques to non-Euclidean settings. Recently, the CG method in Riemannian manifolds has been proposed in~\cite{weber_riemannian_2023}, and the CG method for composite functions, called the generalized conditional gradient (GCG) method, has also been discussed in many works~\cite{assunccao2023generalized, yu2017generalized, zhao2023analysis, cheung2015efficient}.

In this paper, we propose GCG methods on Riemannian manifolds, with general retractions and vector transport. We also focus on three types of step-size strategies: adaptive, diminishing, and those based on the Armijo condition. We provide a detailed discussion on the implementation of subproblem solutions for each strategy. For each step-size approach, we analyze the convergence of the method. 
Specifically, for the adaptive and diminishing step-size strategies, we establish a convergence rate of  $\mathcal{O}(1/k)$, where \(k\) represents the number of iterations. For the Armijo step size strategy, we derive an iteration complexity of \(\mathcal{O}(1/\epsilon^2)\), where \(\epsilon\) represents the desired accuracy in the optimality condition. 

The outline of this paper is as follows.
In section~\ref{sec: preliminaries}, we provide an overview of fundamental concepts and results in Riemannian optimization.
In Section~\ref{sec3}, we introduce the generalized conditional gradient method on Riemannian manifolds with three different types of step size. 
Section~\ref{sec4} presents the convergence analysis, while Section~\ref{sec: subproblem} focuses on the discussion of the subproblem.
Section~\ref{sec: accelerate} explores the accelerated version of the proposed method. 
Numerical experiments are presented in Section~\ref{sec: expreriment} to illustrate the effectiveness of our approach. Finally, in Section~\ref{sec: conclusion}, we conclude the paper and suggest potential directions for future research.

\section{Preliminaries}\label{sec: preliminaries}

This section summarizes essential definitions, results, and concepts fundamental to Riemannian optimization~\cite{sato_riemannian_2021}.
The transpose of a matrix $A$ is denoted by $A^T$, and the identity matrix with dimension $\ell$ is written as $I_\ell$.
A Riemannian manifold \(\mathcal{M}\) is a smooth manifold equipped with a Riemannian metric, which is a smoothly varying inner product \(\langle \eta_x, \sigma_x \rangle_x \in \mathbb{R}\), defined on the tangent space at each point \(x \in \mathcal{M}\), where \(\eta_x\) and \(\sigma_x\) are tangent vectors in the tangent space \(T_x \mathcal{M}\).
The tangent space at a point \(x\) on the manifold \(\mathcal{M}\) is denoted by \(T_x \mathcal{M}\), while the tangent bundle of \(\mathcal{M}\) is denoted as \(T\mathcal{M} := \{(x, d) \mid d \in T_x \mathcal{M}, x \in \mathcal{M}\}\).
The norm of a tangent vector \(\eta \in T_x \mathcal{M}\) is defined as \(\|\eta\|_x := \sqrt{\langle\eta, \eta\rangle_x}\).
When the norm and the inner product are written as \( \|\cdot\| \) and $\langle \cdot, \cdot \rangle$,
without the subscript, then they are the Euclidean ones.
For a map \(F \colon \mathcal{M} \rightarrow \mathcal{N}\) between two manifolds \(\mathcal{M}\) and \(\mathcal{N}\), the derivative of \(F\) at a point \(x \in \mathcal{M}\), denoted by \(\mathrm{D} F(x)\), is a linear map \(\mathrm{D} F(x) \colon T_x \mathcal{M} \rightarrow T_{F(x)} \mathcal{N}\) that maps tangent vectors from the tangent space of \(\mathcal{M}\) at \(x\) to the tangent space of \(\mathcal{N}\) at \(F(x)\).
The Riemannian gradient \(\operatorname{grad} f(x)\) of a smooth function \(f \colon \mathcal{M} \rightarrow \mathbb{R}\) at a point \(x \in \mathcal{M}\) is defined as the unique tangent vector at \(x\) that satisfies the equation \(\langle\operatorname{grad} f(x), \eta\rangle_x = \mathrm{D} f(x)[\eta]\) for every tangent vector \(\eta \in T_x \mathcal{M}\).
We also define the Whitney sum of the tangent bundle as \(T \mathcal{M} \oplus T \mathcal{M} := \left\{(\xi, d) \mid \xi, d \in T_x \mathcal{M}, x \in \mathcal{M}\right\}\).

A retraction is a mapping that maps points from the tangent space of a manifold back onto the manifold.   
\begin{definition}\cite{sato_riemannian_2021}
    A smooth map \( R \colon T\mathcal{M} \to \mathcal{M} \) is called a retraction on a smooth manifold \(\mathcal{M}\) if its restriction to the tangent space \( T_x\mathcal{M} \) at any point \( x \in \mathcal{M} \), denoted by \( R_x \), satisfies the following conditions:
    \begin{enumerate}
    \item $R_x\left(0_x\right)=x$,
    \item $\mathrm{D} R_x\left(0_x\right)=\operatorname{id}_{T_x \mathcal{M}}$ for all $x \in \mathcal{M}$, 
    \end{enumerate}
    where \(0_x\) denotes the zero vector in \(T_x \mathcal{M}\), and \(\operatorname{id}_{T_x \mathcal{M}}\) represents the identity map in \(T_x \mathcal{M}\).
\end{definition}
Depending on the problem and the properties of the manifold, different types of retractions can be utilized. Many Riemannian optimization algorithms employ a retraction that generalizes the exponential map on \(\mathcal{M}\).
For instance, consider an iterative method that generates iterates $\{x^k\}$. In Euclidean spaces, the update takes the form $x^{k+1}=x^k + t_k d ^k$, where $d^k$ is a descent direction and $t_k$ is a step size. In the Riemannian case, this update is generalized to:
$$x^{k+1}=R_{x^k}(t_k d^k),\quad \text{for }  k = 0,1,2, \ldots.$$

Another essential concept is vector transport, which is critical in Riemannian optimization to maintain the intrinsic geometry of the manifold during computations.
\begin{definition}\cite{sato_riemannian_2021}
A map $\mathcal{T} \colon T \mathcal{M} \oplus T \mathcal{M} \rightarrow T \mathcal{M}$ is called a vector transport on $\mathcal{M}$ if it satisfies the following conditions:
\begin{enumerate}
\item There exists a retraction $R$ on $\mathcal{M}$ such that $\mathcal{T}_d(\xi) \in T_{R_x(d)} \mathcal{M}$ for all $x \in \mathcal{M}$ and $\xi, d \in T_x \mathcal{M}$.
\item For any $x \in \mathcal{M}$ and $\xi \in T_x \mathcal{M}, \mathcal{T}_{0_x}(\xi)=\xi$ holds, where $0_x$ is the zero vector in $T_x \mathcal{M}$, i.e., $\mathcal{T}_{0_x}$ is the identity map in $T_x \mathcal{M}$.
\item For any $a, b \in \mathbb{R}, x \in \mathcal{M}$, and $\xi, d, \zeta \in T_x \mathcal{M}, \mathcal{T}_d(a \xi+b \zeta)=a \mathcal{T}_d(\xi)+b \mathcal{T}_d(\zeta)$ holds, i.e., $\mathcal{T}_d$ is a linear map from $T_x \mathcal{M}$ to $T_{R_x(d)} \mathcal{M}$.
\end{enumerate}
\end{definition}

The adjoint operator of a vector transport \(\mathcal{T}\), denoted by \(\mathcal{T}^{\sharp}\), satisfies:  
\[
\langle \xi_y, \mathcal{T}_{\eta_x} \zeta_x \rangle_y = \langle \mathcal{T}_{\eta_x}^{\sharp} \xi_y, \zeta_x \rangle_x, \quad \forall \eta_x, \zeta_x \in T_x \mathcal{M}, \, \xi_y \in T_y \mathcal{M},
\]  
where \( y = R_x(\eta_x) \).  
The inverse operator of \(\mathcal{T}\), denoted by \(\mathcal{T}_{\eta_x}^{-1}\), satisfies:  
\[
\mathcal{T}_{\eta_x}^{-1} \mathcal{T}_{\eta_x} = \text{id}, \quad \forall \eta_x \in T_x \mathcal{M},
\]  
where \(\text{id}\) is the identity operator.

Note that the map \(\mathcal{T}\), defined as \(\mathcal{T}_d(\xi) := \mathrm{P}_{\gamma_{x,d}}^{1 \leftarrow 0}(\xi)\), is also a vector transport. Here, \(\mathrm{P}_{\gamma_{x,d}}^{1 \leftarrow 0}\) denotes the parallel transport along the geodesic \(\gamma_{x,d}(t) := \operatorname{Exp}_x(t \, d)\), which connects \(\gamma_{x,d}(0) = x\) and \(\gamma_{x,d}(1) = \operatorname{Exp}_x(d)\). In this context, the exponential map \(\operatorname{Exp}\) acts as the retraction.
Parallel transport is a specific case of vector transport that preserves the length and angle of vectors as they are transported along geodesics, thereby preserving the intrinsic Riemannian metric of the manifold exactly. In contrast, vector transport is a more general operation that allows for the movement of vectors between tangent spaces in a way that approximately preserves geometric properties, and it may not necessarily follow geodesics or maintain exact parallelism.
If there is a unique geodesic between any two points in \(\mathcal{X} \subset \mathcal{M}\), then the exponential map has an inverse \(\operatorname{Exp}_x^{-1} \colon \mathcal{X} \rightarrow T_x \mathcal{M}\). This geodesic represents the unique shortest path and the geodesic distance between \(x\) and \(y\) in \(\mathcal{X}\) is given by \(\left\|\operatorname{Exp}_x^{-1}(y)\right\|_x = \left\|\operatorname{Exp}_y^{-1}(x)\right\|_y\).
The Riemannian distance on a connected Riemannian manifold $\mathcal{M}$ is defined as
$
\text { dist : } \mathcal{M} \times \mathcal{M} \rightarrow \mathbb{R}: \operatorname{dist}(x, y)=\inf _{\Gamma} L(\gamma)
$, where $\Gamma$ is the set of all curves in $\mathcal{M}$ joining points $x$ and $y$. 
For any two points on a geodesically complete Riemannian manifold, the Riemannian distance is precisely the geodesic distance.

Now, we define geodesic convexity sets and functions, \(L\)-smoothness, and related concepts on Riemannian manifolds as follows:

\begin{definition}\cite{Boumal_2023}
    A set $\mathcal{X}$ is called geodesically convex if for any $x, y \in \mathcal{X}$, there is a geodesic $\gamma$ with $\gamma(0)=x, \gamma(1)=y$ and $\gamma(t) \in \mathcal{X}$ for all $t \in[0,1]$.
\end{definition}

\begin{definition}
    A function $h \colon \mathcal{M} \rightarrow \mathbb{R}$ is called geodesically convex, where $\mathcal{S}\subseteq \mathcal{M}$ is a geodesically convex set, if for any $p, q \in \mathcal{M}$, we have $h(\gamma(t)) \leq(1-t) h(p)+t h(q)$ for any $t \in[0,1]$, where $\gamma$ is the geodesic connecting $p$ and $q$.
\end{definition}

\begin{proposition}\cite{weber_riemannian_2023}
    Let $h \colon \mathcal{M} \rightarrow \mathbb{R}$ be a smooth and geodesically convex function. Then, for any $x, y \in \mathcal{M}$,
    we have
    $$
    h(y)-h(x) \geq\left\langle\operatorname{grad} h(x), \operatorname{Exp}_x^{-1}(y)\right\rangle_x .
    $$
\end{proposition}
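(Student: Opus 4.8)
The plan is to reduce the stated inequality to the classical first-order characterization of convexity, by restricting $h$ to the geodesic joining $x$ and $y$ and differentiating at the endpoint $x$. First I would invoke the definition of geodesic convexity of $h$ to obtain a geodesic $\gamma \colon [0,1] \to \mathcal{M}$ with $\gamma(0) = x$ and $\gamma(1) = y$ along which
\[
h(\gamma(t)) \leq (1-t)\,h(x) + t\,h(y), \qquad t \in [0,1].
\]
Subtracting $h(x)$ from both sides and dividing by $t > 0$ gives
\[
\frac{h(\gamma(t)) - h(x)}{t} \leq h(y) - h(x), \qquad t \in (0,1].
\]

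Next I would let $t \to 0^+$. Since $h$ is smooth and $\gamma$ is a smooth geodesic, the scalar function $\varphi(t) := h(\gamma(t))$ is differentiable, and the left-hand side above is exactly the difference quotient $\bigl(\varphi(t) - \varphi(0)\bigr)/t$, which converges to $\varphi'(0)$. By the chain rule together with the definition of the Riemannian gradient,
\[
\varphi'(0) = \mathrm{D}h(x)[\gamma'(0)] = \langle \operatorname{grad} h(x), \gamma'(0) \rangle_x,
\]
and since the right-hand side $h(y) - h(x)$ does not depend on $t$, passing to the limit yields $\langle \operatorname{grad} h(x), \gamma'(0) \rangle_x \leq h(y) - h(x)$.

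Finally I would identify the initial velocity $\gamma'(0)$ with $R_x^{-1}(y)$. When $R$ is the exponential map and $\gamma(t) = \operatorname{Exp}_x\bigl(t\,\operatorname{Exp}_x^{-1}(y)\bigr)$ is the minimizing geodesic used in the definition of geodesic convexity, one has $\gamma'(0) = \operatorname{Exp}_x^{-1}(y) = R_x^{-1}(y)$, which closes the argument.

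The step I expect to be the main obstacle is precisely this last identification: for a \emph{general} retraction $R$ there is no reason that $\gamma'(0)$ should coincide with $R_x^{-1}(y)$, so the statement implicitly requires that the inverse retraction appearing on the right be the one induced by the geodesic $\gamma$, i.e.\ that $R = \operatorname{Exp}$ (whose inverse recovers the geodesic's initial velocity). I would therefore make this compatibility explicit at the outset, and I would also take care to justify the passage to the limit, namely the differentiability of $\varphi$ and the applicability of the chain rule, using the smoothness of $h$ together with that of $\gamma$.
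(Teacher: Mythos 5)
Your proof is correct and is the standard argument: restrict $h$ to the geodesic, use the convexity inequality to bound the difference quotient, and pass to the limit $t \to 0^+$ using smoothness of $h$ and $\gamma$ to identify the limit with $\langle \operatorname{grad} h(x), \gamma'(0)\rangle_x$. The paper itself does not prove this proposition --- it is imported by citation --- so there is no in-paper proof to compare against, but your argument is the one underlying the cited result. Your flagged obstacle is also genuine and well-placed: the inequality only holds as stated when $R_x^{-1}(y)$ coincides with the initial velocity $\gamma'(0)$ of the geodesic used in the definition of geodesic convexity, i.e.\ when the retraction is the exponential map or is otherwise ``defined through $\gamma$.'' The paper tacitly acknowledges exactly this in the sentence preceding the next proposition (``defining the retraction through $\gamma$''), even though elsewhere it works with general retractions; making that compatibility hypothesis explicit, as you propose, is the right thing to do and is the only point at which the statement could fail for an arbitrary retraction.
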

To ensure that the convexity property of functions is preserved when using a general retraction, we make the following definition.
\begin{assumption}\label{assumption convex}
     Let \( h: \mathcal{M} \to \mathbb{R} \) be a smooth function, where \(\mathcal{M}\) is a Riemannian manifold. Consider a retraction \( R_x: T_x \mathcal{M} \to \mathcal{M} \), for any \( x, y \in \mathcal{M} \) such that \( R_x(y) \) and \( R_x^{-1}(y) \) are well-defined, the following inequality holds for all \( \lambda \in [0,1] \):
    \[
    h(R_x(\lambda R_x^{-1}(y))) \leq (1-\lambda) h(x) + \lambda h(y).
    \] 
\end{assumption}   
To distinguish it from geodesic convexity, we define a function $h$ as $R$-convex if it satisfies the conditions outlined in Assumption~\ref{assumption convex}.
Note that the exponential map satisfies this assumption. 
Note that not all retractions satisfy $R$-convexity and it often depends on the specific function (for example, for the Euclidean retraction on the SPD manifold, the function $h(P) = -\log \det(P)$ is globally $R$-convex). However, since a second-order retraction~\cite[Definition 5.42]{Boumal_2023}
approximates the geodesic more closely, a well-behaved function (e.g., a geodesically convex function) is more likely to satisfy $R$-convexity with respect to such a retraction, particularly in a local neighborhood. 
In the following discussion and throughout the development of our algorithms, we restrict our attention to retractions that satisfy this assumption.
Motivated by this assumption, we formally introduce the definition of an $R$-convex set.
\begin{definition}
Let $\mathcal{M}$ be a Riemannian manifold equipped with a retraction $R$. A subset $\mathcal{S} \subseteq \mathcal{M}$ is said to be $R$-convex with respect to $R$ if for any $x, y \in \mathcal{S}$ for which the inverse retraction $R_x^{-1}(y)$ is well-defined, the retraction curve joining them is contained entirely within $\mathcal{S}$. 
Formally, for all $t \in [0, 1]$, the following condition must hold:
\[
R_x\left(t R_x^{-1}(y)\right) \in \mathcal{S}.
\]
\end{definition}

Here, we give the definition of $L$-smoothness, which is also referred to as $L$-retraction-smooth as discussed in~\cite{huang_riemannian_2022}.
\begin{definition}
 A function $h \colon \mathcal{M} \rightarrow \mathbb{R}$ is called  $L$-smooth if there exists $L$ such that
\begin{equation}\label{ineq l-smooth}
    h(y) \leq h(x)+\left\langle\grad h(x), R^{-1}_{x}(y)\right\rangle_x +\frac{L}{2}\left\|{R}_x^{-1}(y)\right\|^2_x \quad \forall x, y \in \mathcal{M}.
\end{equation}
where $\mathcal{T}$ is the vector transport from $T_x \mathcal{M}$ to $T_y \mathcal{M}$. 
\end{definition}
A strong version of \eqref{ineq l-smooth}, as given below, is also used in \cite[Assumption 2.6]{boumal2019global}.
$$
|h(y) - h(x)-\left\langle\grad h(x), R^{-1}_{x}(y)\right\rangle_x |\leq \frac{L}{2}\left\|{R}_x^{-1}(y)\right\|^2_x \quad \forall x, y \in \mathcal{M}.
$$
In addition, if the retraction is chosen to be the exponential map, it follows that $h$ is geodesically $L$-smooth, as defined in~\cite{weber_riemannian_2023} and~\cite[Section 10.4]{Boumal_2023}.

\section{Riemannian generalized conditional gradient method}\label{sec3}

In this paper, we consider the following Riemannian optimization problem:
    \begin{equation}
    \label{eq:prob}
    \begin{aligned}
        \min & \quad F(x):= f(x)+ g(x) \\
        \mbox{s.t.} & \quad x \in \mathcal{X} ,
    \end{aligned}
    \end{equation}
where $\mathcal{X} \subseteq \mathcal{M}$ is a compact $R$-convex set and $F \colon \mathcal{M} \rightarrow \mathbb{R} $ is a composite function with $f\colon \mathcal{M} \rightarrow \mathbb{R}$ being continuously differentiable and $g\colon \mathcal{M} \rightarrow \mathbb{R}$ 
being a closed, $R$-convex and lower semicontinuous (possibly nonsmooth) function.
Here, we can also consider \( F \) as an extended-valued function. All the analyses would still apply, but we omit the infinity for simplicity.
Also, we make the following assumption.
\begin{assumption}\label{assum1}
    The set $\mathcal{X} \subseteq \mathcal{M}$ is an $R$-convex  and complete set where the retraction ${R}$ and its inverse ${R}^{-1}$ are well-defined and smooth over  $\mathcal{X}$. Additionally, the inverse retraction \( R^{-1}_x(y) \) is continuous with respect to both \( x \) and \( y \).
\end{assumption}

As discussed in \cite[Section 10.2]{Boumal_2023}, 
for general retractions, over some domain of \( \mathcal{M} \times \mathcal{M} \) which contains all pairs \( (x, x) \), the map 
$(x, y) \mapsto R_x^{-1}(y)$
can be defined smoothly jointly in \( x \) and~\( y \). In this sense, we can say that the above assumption is reasonable.

Now, let us recall the CG method in the Euclidean space. Consider minimizing a convex function \( \tilde{f} \colon \mathbb{R}^n \rightarrow \mathbb{R} \) over a convex compact set \( \mathcal{X} \subseteq  \mathbb{R}^n\).
At each iteration \( k \), the method solves a linear approximation of the original problem by finding a direction \( s^k \) that minimizes the linearized objective function over the constraint set \( \mathcal{X} \), i.e.,
$s^k = \arg\min_{s \in \mathcal{X}} \langle \nabla \tilde{f}(x^k), s \rangle .$
Then, it updates the current point \( x^k \) using a step size \( \lambda_k \) along the direction \( s^k - x^k \), i.e.,
$x^{k+1} = x^k + \lambda_k (s^k - x^k).$
For composite functions of the form \( \tilde{f}(x) = \tilde{h}(x) + \tilde{g}(x) \), where \( \tilde{h} \) is smooth and \( \tilde{g} \) is convex, the CG method can be adapted to handle the non-smooth term \( \tilde{g} \). The update step becomes:
\[ 
x^{k+1} = \arg\min_{s \in \mathcal{X}} \langle \nabla \tilde{h}(x^k), s \rangle + \tilde{g}(s).
\]
In this case, \( s \mapsto \langle \nabla \tilde{h}(x^k), s \rangle + \tilde{g}(s) \) is the objective function for the subproblem, which remains convex since it is the sum of the convex function  \( \tilde{g} \) and the linear term \( \langle \nabla \tilde{h}(x^k), \cdot \rangle \). Moreover, the boundedness of $\mathcal{X}$ guarantees the existence of a solution to this subproblem.
The step size \( \lambda_k \) can be chosen using various strategies. The CG method is particularly useful when the constraint set \( \mathcal{X} \) is not an easy set, making projection-based methods computationally expensive.

The exponential map, the parallel transport, and specific step size are used in the Riemannian CG method proposed in~\cite{weber_riemannian_2023}. Now we extend this approach to a more general framework for composite optimization problems, without specifying the retraction and the vector transport.
\begin{algorithm}[H]
    \caption{Riemannian generalized conditional gradient method (RGCG)}\label{alg:RGCG}
    
    \textbf{Step 0. Initialization:}
    
    \qquad Choose an initial point \(x^0 \in \mathcal{X}\) and set \(k = 0\).
     
    \textbf{Step 1. Compute the search direction:}
     
    \qquad Compute an optimal solution $p(x^k)$ and the optimal value $\theta(x^k)$ as
    \begin{equation}\label{eq pk}
     p(x^k) = \arg\min_{u \in \mathcal{X}}  \left\langle\grad f(x^k), R^{-1}_{x^k}(u)\right\rangle_{x^k}  +g(u) -g(x^k),
    \end{equation}
    \begin{equation}\label{def theta}
    \theta(x^k) =  \left\langle\grad f(x^k), R^{-1}_{x^k}(p(x^k)) \right\rangle_{x^k} +g(p(x^k)) -g(x^k).
    \end{equation} 
    \qquad Define the search direction by $d(x^k) = R^{-1}_{x^k}(p(x^k))$.\\
    \textbf{Step 2. Compute the step size:} 
    
    \qquad Compute the step size \( \lambda_k \). 

    \textbf{Step 3. Update the iterates:}
     
    \qquad Update the current iterate
     $x^{k+1} = R_{x^k} ( \lambda_k d(x^k)).$
       
    \textbf{Step 4. Convergence check:} 
     
    \qquad If a convergence criteria is met, stop; otherwise, set \( k = k + 1 \) and return to Step~1.
\end{algorithm}
Our method is presented in Algorithm~\ref{alg:RGCG}. In Step~1, we need to solve the subproblem~\eqref{eq pk}, which, similarly to the Euclidean case, is related to the linear approximation of the objective functions. In Step~2, we compute the step size based on some strategies that we will mention later. In Step~3, we update the iterate by using some retraction. Note that if \( u = x^k \) in \eqref{eq pk}, then \( \left\langle \grad f(x^k), R^{-1}_{x^k}(u) \right\rangle_{x^k} + g(u) - g(x^k) = 0 \), which implies that \(\theta(x^k) \leq 0\). And since $d(x^k) = R^{-1}_{x^k}(p(x^k))$, the subproblem is equivalent to
    \begin{equation}\label{eq dk}
         d(x^k) = \arg\min_{d \in T_{x^k}\mathcal{X}}  \left\langle\grad f(x^k), d\right\rangle_{x^k}  +g(R_{x^k}(d)) -g(x^k).
    \end{equation}

We now consider three distinct, rigorously defined strategies, which are also discussed in~\cite{assunccao2021conditional} and elaborated below. 
~\\   

\noindent\textbf{Armijo step size:} Let \( \zeta \in (0,1) \) and \( 0 < \omega_1 < \omega_2 < 1 \). The step size \( \lambda_k \) is determined using the following algorithm:

Step 0. Set $\lambda_{k_0}=1$ and initialize $\ell \leftarrow 0$.

Step 1. If
$$
F(R_{x^k} ( \lambda_k d(x^k))) \leq F(x^k)+\zeta \lambda_{k_{\ell}} \theta (x^k),
$$

\hspace{28pt}  then set $\lambda_k:=\lambda_{k_{\ell}}$ and return to the main algorithm.

Step 2. Find $\lambda_{k_{\ell+1}} \in\left[\omega_1 \lambda_{k_{\ell}}, \omega_2 \lambda_{k_{\ell}}\right]$, update $\ell \leftarrow \ell+1$, and return to Step 1.

\noindent\textbf{Adaptive step size:} For all $k$, set
$$
\lambda_k:=\min \left\{1, \frac{-\theta(x^k)}{L \dist^2(p\left(x^k\right), x^k)}\right\}
=\operatorname{argmin}_{\lambda \in(0,1]}\left\{ \lambda\theta(x^k)+\frac{L}{2} \lambda^2 \dist^2(p(x^k), x^k) \right\} .
$$

\noindent\textbf{Diminishing step size:} For all $k$, set
$$
\lambda_k:=\frac{2}{k+2} .
$$

\section{Convergence analysis}\label{sec4} 
In this section, we will provide the convergence analysis for three types of step sizes. The following assumptions will be used in the convergence results.

\begin{assumption}\label{assu Lsmooth}
    The function $f$ is $L$-smooth, namely, there exist a constant  $L$ such that \eqref{ineq l-smooth} holds.
\end{assumption}

Since $\mathcal{X}$ is a compact set, we can, without loss of generality, assume that $\mathcal{X} =  \mathrm{dom}(g) $ is a compact set, its diameter is finite and can be defined by 
\begin{equation}
    \operatorname{diam}(\mathcal{X}):=\sup _{x, y \in \mathcal{X}} \dist(x, y),
\end{equation}
where $\dist(x, y)$ is defined in Section~\ref{sec: preliminaries}.

\begin{assumption}\label{assu:omega}
    For $\{x^k\}$ generated by Algorithm~\ref{alg:RGCG}, there exists $\Omega>0$ such that $\Omega \geq$ $\max \big\{\max \big\{ \|R_{x^i}^{-1} (x^j)\|_{x^i}: {x^i}, {x^j} \in \{x^k\} \big\}, \operatorname{diam}(\mathcal{X})\big\}$.
\end{assumption}

As \(k\) tends to infinity, we can assume without loss of generality that \(x^k\) lies within a compact neighborhood of \(x^*\). Due to the smoothness of the inverse retraction, this ensures that \(\|R_{x^i}^{-1}(x^j)\|_{x^i}\) remains bounded. Hence, Assumption \ref{assu:omega} is reasonable.
\begin{lemma}\label{lemma:des}
    Let $\{x^k\}$ be generated by Algorithm~\ref{alg:RGCG}. Define $z^k = p(x^k) $ and  recall that $x^{k+1}= R_{x^k}(\lambda R^{-1}_{x^k}(z^k))$.
    Then,
    we have
    \begin{equation}
        F(x^{k+1}) \leq F(x^k)+ \lambda\theta (x^k) +\frac{L\lambda^2}{2}\Omega^2.
    \end{equation} 
\end{lemma}
\begin{proof}
    
    From the definition of $F$, we have
    $$
    \begin{aligned}
    F(x^{k+1}) &= f(x^{k+1}) +g(x^{k+1}) \\
    &\leq f(x^k)+\left\langle\grad f(x^k), R^{-1}_{x^k}(x^{k+1})\right\rangle_{x^k} +\frac{L}{2}\left\|{R}_{x^k}^{-1}(x^{k+1})\right\|^2_{x^k}+g(x^{k+1}) \\
    &\leq f(x^k)+\left\langle\grad f(x^k), R^{-1}_{x^k}(x^{k+1})\right\rangle_{x^k} +\frac{L}{2}\left\|{R}_{x^k}^{-1}(x^{k+1})\right\|^2_{x^k}+(1-\lambda)g(x^k) +\lambda g(z^k) \\
    &= F(x^k) + \lambda\left\langle\grad f(x^k), R^{-1}_{x^k}(z^k)\right\rangle_{x^k} -\lambda g(x^k) +\lambda g(z^k) +\frac{L}{2}\left\|{R}_{x^k}^{-1}(x^{k+1})\right\|^2_{x^k}\\
    &= F(x^k) + \lambda \left(\left\langle\grad f(x^k), R^{-1}_{x^k}(z^k)\right\rangle_{x^k} - g(x^k) + g(z^k)\right) +\frac{L}{2}\left\|{R}_{x^k}^{-1}(x^{k+1})\right\|^2_{x^k}\\
    &= F(x^k)+ \lambda\theta (x^k) +\frac{L\lambda^2}{2}\left\|{R}_{x^k}^{-1}(z^k)\right\|^2_{x^k}\\
    &\leq F(x^k)+ \lambda\theta (x^k) +\frac{L\lambda^2}{2}\Omega^2,
    \end{aligned}
    $$
    where the first inequality comes from the $L$-smoothness of $f$ and the second inequality is due to the $R$-convexity of $g$. Then, from the definition of $\theta$ in \eqref{def theta}, and using the fact that $\left\|{R}_{x^k}^{-1}(x^{k+1})\right\|^2_{x^k} = \lambda^2 \left\|{R}_{x^k}^{-1}(z^k)\right\|^2_{x^k}$ and  Assumption~\ref{assu:omega}, we obtain the desired result.
\end{proof}

Additionally, we can derive the following proposition.
\begin{proposition}\label{prop:des}
    Let  $x^* \in \mathcal{X}$ be an optimal point of \eqref{eq:prob}, and assume that $f$ is $R$-convex. Then, Algorithm \ref{alg:RGCG} generates $\{x^k\}$ satisfying
    \begin{equation}
        F(x^{k+1})  \leq F(x^*)+\pi_k\left(1-\lambda_0\right)\left(F(x^0)-F(x^*)\right)+\sum_{s=0}^k \frac{\pi_k}{\pi_s} \frac{L\lambda_s^2}{2}\Omega^2,
    \end{equation}
    where $\pi_k:=\prod_{s=1}^k\left(1-\lambda_s\right)$ with $\pi_0=1$.
\end{proposition}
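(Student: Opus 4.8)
The plan is to turn the one-step estimate of Lemma~\ref{lemma:des} into a linear recursion for the optimality gap $h_k := F(x^k) - F(x^*)$ and then unroll it by induction. First I would apply Lemma~\ref{lemma:des} with $x = x^k$, $z = p(x^k)$, and $y = x^{k+1} = R_{x^k}(\lambda_k d(x^k)) = R_{x^k}\!\left(\lambda_k R^{-1}_{x^k}(p(x^k))\right)$, so that the role of $\lambda$ is played by $\lambda_k$. This gives
$$
F(x^{k+1}) \leq F(x^k) + \lambda_k \theta(x^k) + \frac{L\lambda_k^2}{2}\operatorname{diam}(\mathcal{X})^2 .
$$

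The crucial step is to bound $\theta(x^k)$ in terms of the gap. Since $p(x^k)$ is a minimizer of the subproblem~\eqref{eq pk} and $x^* \in \mathcal{X}$ is feasible, substituting $x^*$ for $p(x^k)$ in the objective can only increase its value, so
$$
\theta(x^k) \leq \left\langle \grad f(x^k), R^{-1}_{x^k}(x^*) \right\rangle_{x^k} + g(x^*) - g(x^k).
$$
Invoking the geodesic convexity of $f$, namely $f(x^*) - f(x^k) \geq \left\langle \grad f(x^k), R^{-1}_{x^k}(x^*) \right\rangle_{x^k}$, to replace the inner-product term, I obtain
$$
\theta(x^k) \leq \big(f(x^*) - f(x^k)\big) + \big(g(x^*) - g(x^k)\big) = F(x^*) - F(x^k) = -h_k .
$$
This estimate, which couples the optimality of the subproblem solution with the convexity of $f$, is the heart of the argument; the remainder is purely mechanical.

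Substituting this into the descent inequality and writing $C := \tfrac{L}{2}\operatorname{diam}(\mathcal{X})^2$ produces the scalar recursion
$$
h_{k+1} \leq (1-\lambda_k)\, h_k + C\lambda_k^2 .
$$
Finally I would unroll this recursion by induction on $k$. The base case $k=0$ reads $h_1 \leq (1-\lambda_0)h_0 + C\lambda_0^2$, which matches the claimed bound because $\pi_0 = 1$. For the inductive step I multiply the hypothesis for $h_k$ by $(1-\lambda_k)$ and add $C\lambda_k^2$; using the identities $(1-\lambda_k)\pi_{k-1} = \pi_k$ and $(1-\lambda_k)\tfrac{\pi_{k-1}}{\pi_s} = \tfrac{\pi_k}{\pi_s}$, the factor $\pi_{k-1}$ is replaced throughout by $\pi_k$, while the fresh term $C\lambda_k^2 = \tfrac{\pi_k}{\pi_k}C\lambda_k^2$ extends the summation index from $k-1$ to $k$. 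Recalling $h_j = F(x^j) - F(x^*)$ and $C = \tfrac{L}{2}\operatorname{diam}(\mathcal{X})^2$ then yields exactly the stated inequality. I expect the only delicate point to be the bookkeeping of the $\pi_k/\pi_s$ weights in the induction, which is routine algebra once the recursion is in hand.
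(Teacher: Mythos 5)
Your proposal is correct and follows essentially the same route as the paper: apply Lemma~\ref{lemma:des} with $\lambda = \lambda_k$, bound $\theta(x^k) \leq F(x^*) - F(x^k)$ by combining the minimality of $p(x^k)$ in the subproblem with the geodesic convexity of $f$, obtain the recursion $\Delta_{k+1} \leq (1-\lambda_k)\Delta_k + \tfrac{L\lambda_k^2}{2}\operatorname{diam}(\mathcal{X})^2$, and unroll it. The only cosmetic difference is that you carry out the unrolling as an explicit induction with the $\pi_k/\pi_s$ bookkeeping, whereas the paper states the expanded product-sum form directly; the content is identical.
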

\begin{proof}
    From Lemma~\ref{lemma:des}, 
    we obtain
    $$F(x^{k+1}) \leq F(x^k)+ \lambda_k\theta (x^k) +\frac{L\lambda_k^2}{2}\Omega^2.$$
    Define $\Delta_k:=F(x^{k})-F(x^*)$. Then, we get
    $$
    \Delta_{k+1} \leq \Delta_k + \lambda_k \theta (x^k)+\frac{L\lambda_k^2}{2}\Omega^2.
    $$ 
    From the convexity of $f$, we obtain
    \begin{align*}
        F(x^*)-F(x^k) &\geq\left\langle\operatorname{grad} f(x^k), R_{x^k}^{-1}(x^*)\right\rangle_{x^k} + g(x^*) - g(x^k) \\&\geq \min_{u \in \mathcal{X}} \left \langle\grad f(x^k), R^{-1}_{x^k}(u)\right\rangle_{x^k}  +g(u) -g(x^k) = \theta (x^k).
    \end{align*}
    Then 
    we have $\Delta_k \leq -\theta (x^k)$, which gives
    \[
    \Delta_{k+1} \leq (1 - \lambda_k) \Delta_k + \frac{L \lambda_k^2}{2} \Omega^2.
    \]
    Now,  let us expand \(\Delta_k\) and gradually express \(\Delta_k\) in terms of \(\Delta_0\). Ultimately, for the recursive relation at step \(k+1\), we obtain:
    \[
    \Delta_{k+1} \leq \prod_{s=0}^k (1 - \lambda_s) \Delta_0 + \sum_{s=0}^k \frac{L \lambda_s^2}{2} \prod_{j=s+1}^k (1 - \lambda_j) \Omega^2,
    \]    
    that is,
    $$
    \Delta_{k+1} \leq \pi_k\left(1-\lambda_0\right) \Delta_0+\sum_{s=0}^k \frac{\pi_k}{\pi_s} \frac{L\lambda_s^2}{2}\Omega^2.
    $$
    
    \noindent Rearranging the terms completes the proof.
\end{proof}

Now we present the following lemma, which will be instrumental in our analysis of convergence and iteration complexity bounds for the CG method.

\begin{lemma} \label{lemma:ak} \cite[Lemma 13.13]{beck2017first}
    Let $\left\{a_k\right\}$ and $\left\{b_k\right\}$ be nonnegative sequences of real numbers satisfying
    $$
    a_{k+1} \leq a_k-b_k \beta_k+\frac{A}{2} \beta_k^2, \quad k=0,1,2, \ldots,
    $$
    where $\beta_k:=2 /(k+2)$ and $A$ is a positive number. Suppose that $a_k \leq b_k$ for all $k$. Then
    \begin{itemize}
        \item[(1)]  $a_k \leq \frac{2 A}{k}$ for all $k=1,2,\ldots$
        \item[(2)] $\min _{\ell \in\left\{\left\lfloor\frac{k}{2}\right\rfloor+2, \ldots, k\right\}} b_{\ell} \leq \frac{8 A}{k-2}$ for all $k=3,4, \ldots, \quad$ where $\lfloor k / 2\rfloor=\max \{n \in \mathbb{N}: n \leq k / 2\}$.
    \end{itemize} 
\end{lemma}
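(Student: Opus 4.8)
The plan is to exploit the hypothesis $a_k \le b_k$ to turn the given recursion into a self-contained inequality in the sequence $\{a_k\}$ alone, and then to treat the two claims separately: part (i) by a direct induction, and part (ii) by summing the recursion over a carefully chosen window of indices and bounding the resulting harmonic-type sums.

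For part (i), since $a_k \le b_k$ and $\beta_k > 0$, I would first replace $b_k$ by $a_k$ in the term $-b_k\beta_k$ to obtain
$$a_{k+1} \le (1-\beta_k)a_k + \frac{A}{2}\beta_k^2 = \frac{k}{k+2}a_k + \frac{2A}{(k+2)^2},$$
using $\beta_k = 2/(k+2)$. I would then prove $a_k \le 2A/k$ by induction on $k$. The base case $k=1$ follows from the recursion at $k=0$ (where $\beta_0=1$ and $a_0-b_0\le 0$), giving $a_1\le A/2\le 2A$. For the inductive step, substituting $a_k\le 2A/k$ into the displayed inequality yields $a_{k+1}\le 2A(k+3)/(k+2)^2$, and the target bound $a_{k+1}\le 2A/(k+1)$ reduces to the elementary inequality $(k+1)(k+3)\le (k+2)^2$, i.e. $3\le 4$.

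For part (ii), I would return to the original recursion, rearrange it as $b_\ell\beta_\ell \le a_\ell - a_{\ell+1} + \frac{A}{2}\beta_\ell^2$, and sum over $\ell$ from $m := \lfloor k/2\rfloor + 2$ to $k$. The differences $a_\ell - a_{\ell+1}$ telescope to $a_m - a_{k+1}\le a_m$, and part (i) bounds $a_m\le 2A/m$. Writing $b_{\min}$ for the minimum of $b_\ell$ over the window and factoring it out through $\sum b_\ell\beta_\ell \ge b_{\min}\sum\beta_\ell$, I obtain
$$b_{\min} \le \frac{a_m + \frac{A}{2}\sum_{\ell=m}^k \beta_\ell^2}{\sum_{\ell=m}^k \beta_\ell}.$$
It then remains to estimate the two sums. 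For the denominator I would bound each term $\beta_\ell=2/(\ell+2)$ below by $2/(k+2)$, so that $\sum_{\ell=m}^k\beta_\ell\ge 2(k-m+1)/(k+2)\ge (k-2)/(k+2)$, since the window contains at least $(k-2)/2$ indices. For the numerator I would use the telescoping estimate $\beta_\ell^2=4/(\ell+2)^2\le 4/((\ell+1)(\ell+2))$ to get $\sum_{\ell=m}^k\beta_\ell^2\le 4/(m+1)$, combined with $m\ge (k+3)/2$. Feeding these into the display and simplifying should collapse the right-hand side to $8A/(k-2)$.

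The main obstacle is the bookkeeping in part (ii): the summation window must be chosen so that its length is $\Theta(k)$ (which keeps $\sum\beta_\ell$ bounded away from $0$), and the constants in the harmonic-type estimates must be matched so that the final bound comes out exactly as $8A/(k-2)$ rather than a messier expression. The floor function $\lfloor k/2\rfloor$ also forces a short case distinction on the parity of $k$ that must be tracked to keep the inequalities uniform for all $k\ge 3$.
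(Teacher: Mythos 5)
The paper does not prove this lemma; it is quoted verbatim from Beck's book (Lemma 13.13), so there is no in-paper argument to compare against. Your proposal is correct and is essentially the standard proof of that result: I checked that the induction in part (i) closes via $(k+1)(k+3)\le(k+2)^2$, and that in part (ii) the window $\{\lfloor k/2\rfloor+2,\dots,k\}$ has at least $(k-2)/2$ indices, $m\ge(k+3)/2$, and the combined bound $\frac{k+2}{k-2}\cdot\frac{4A}{m}\le\frac{8A}{k+3}\cdot\frac{k+2}{k-2}\le\frac{8A}{k-2}$ indeed collapses as you predicted, uniformly in the parity of $k$.
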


\subsection{Analysis with the adaptive and the diminishing step sizes}
Building upon the theoretical results established previously, we now present the main theorem regarding the convergence rate for two types of step sizes: the adaptive and the diminishing step sizes.
\begin{theorem}\label{thm:ada dimi}
    Let $x^*$ be an optimal point of~\eqref{eq:prob}, and assume that $f$ is $R$-convex. Then, the sequence of iterates $\{x^k\}$ generated by Algorithm \ref{alg:RGCG} with the adaptive or diminishing step size  satisfies $F(x^k)-F(x^*)=O(1 / k)$. Specifically, we obtain
    $$\min _{\ell \in\left\{\left\lfloor\frac{k}{2}\right\rfloor+2, \ldots, k\right\}} -\theta (x^k) \leq \frac{8 L\Omega^2}{k-2}.$$
\end{theorem}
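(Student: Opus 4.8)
The plan is to reduce both step-size rules to the abstract recursion of Lemma~\ref{lemma:ak} under the identifications $a_k := \Delta_k = F(x^k)-F(x^*)$, $b_k := -\theta(x^k)$, $\beta_k := 2/(k+2)$, and $A := L\operatorname{diam}(\mathcal{X})^2$. First I would check the sign hypotheses of that lemma. Nonnegativity of $a_k$ is immediate from optimality of $x^*$, and nonnegativity of $b_k$ follows from the observation made right after Algorithm~\ref{alg:RGCG} that $\theta(x^k)\le 0$. The comparison hypothesis $a_k \leq b_k$, i.e. $\Delta_k \leq -\theta(x^k)$, is exactly the estimate already derived in the preceding proposition from the geodesic convexity of $f$, since
$$F(x^*)-F(x^k) \geq \langle \grad f(x^k), R_{x^k}^{-1}(x^*)\rangle_{x^k} + g(x^*)-g(x^k) \geq \theta(x^k).$$

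Next I would produce the per-iteration recursion $a_{k+1}\leq a_k - b_k\beta_k + \tfrac{A}{2}\beta_k^2$. For the diminishing rule $\lambda_k=\beta_k$, applying Lemma~\ref{lemma:des} with $x=x^k$, $z=p(x^k)$ and $y=x^{k+1}=R_{x^k}(\lambda_k d(x^k))$ gives it directly:
$$\Delta_{k+1} \leq \Delta_k + \lambda_k\theta(x^k) + \frac{L\lambda_k^2}{2}\operatorname{diam}(\mathcal{X})^2 = a_k - b_k\beta_k + \frac{A}{2}\beta_k^2.$$
For the adaptive rule I would exploit that $\lambda_k$ is by definition the minimizer over $(0,1]$ of $\lambda\mapsto \lambda\theta(x^k)+\tfrac{L}{2}\lambda^2\dist^2(p(x^k),x^k)$, which is precisely the bound furnished by Lemma~\ref{lemma:des} \emph{before} relaxing $\dist^2(x^k,p(x^k))$ to $\operatorname{diam}(\mathcal{X})^2$. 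Hence the adaptive decrease is no larger than the value obtained by substituting the admissible choice $\lambda=\beta_k\in(0,1]$ into the same expression, and relaxing $\dist^2$ to the squared diameter afterwards recovers the identical recursion.

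Finally, with the recursion and the three hypotheses verified, I would invoke Lemma~\ref{lemma:ak}: part~(i) yields $F(x^k)-F(x^*)=a_k \leq 2A/k = O(1/k)$, and part~(ii) yields $\min_{\ell}\,(-\theta(x^\ell)) \leq 8A/(k-2) = 8L\operatorname{diam}(\mathcal{X})^2/(k-2)$, which is the displayed claim.

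The only genuinely delicate point is the adaptive-step argument: one must ensure that the objective being minimized in the definition of $\lambda_k$ coincides with the right-hand side of Lemma~\ref{lemma:des} carrying $\dist^2(x^k,p(x^k))$ rather than the diameter, so that the optimality of $\lambda_k$ legitimately dominates the choice $\lambda=\beta_k$; the relaxation to $\operatorname{diam}(\mathcal{X})^2$ is performed only after that comparison. Everything else is bookkeeping, as the descent inequality (Lemma~\ref{lemma:des}), the convexity estimate, and the master recursion (Lemma~\ref{lemma:ak}) are all already in place.
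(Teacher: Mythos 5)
Your proposal is correct and follows essentially the same route as the paper: the descent inequality of Lemma~\ref{lemma:des}, the convexity estimate $\Delta_k \le -\theta(x^k)$, and an application of Lemma~\ref{lemma:ak} with the same identifications of $a_k$, $b_k$, $\beta_k$, and $A$. Your handling of the adaptive rule is in fact slightly more careful than the paper's, which compares the two step sizes directly in the diameter-relaxed expression rather than first using the optimality of $\lambda_k$ for the $\dist^2(x^k,p(x^k))$ objective and relaxing afterwards.
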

\begin{proof}
    From Lemma \ref{lemma:des} and the diminishing step size $\lambda_k=\frac{2}{k+2} \in (0, 1) $, we have
    \begin{equation}\label{eq:Fdec}
        F(x^{k+1}) - F(x^{*}) \leq F(x^{k})-F(x^*)+ \lambda_k\theta (x^k) +\frac{L\lambda_k^2}{2}\Omega^2.
    \end{equation}
    
    \noindent Considering the adaptive step size
    $
    \eta_k
    =\operatorname{argmin}_{\lambda \in(0,1]}\left\{ \lambda\theta(x^k)+\frac{L}{2} \lambda^2 d^2(p(x^k), x^k) \right\}
    $, we have $\eta_k\theta (x^k) +\frac{L\eta_k^2}{2} d^2(p(x^k), x^k)\leq \lambda_k\theta (x^k) +\frac{L\lambda_k^2}{2}d^2(p(x^k), x^k)\leq \lambda_k\theta (x^k) +\frac{L\lambda_k^2}{2}\operatorname{diam}(\mathcal{X})^2\leq \lambda_k\theta (x^k) +\frac{L\lambda_k^2}{2}\Omega^2$, so the inequality \eqref{eq:Fdec} still holds. 

    Define $\Delta_k:=F(x^{k})-F(x^*)$. Then \eqref{eq:Fdec} is equivalent to
    $$
    \Delta_{k+1} \leq \Delta_k + \lambda_k \theta (x^k)+\frac{L\lambda_k^2}{2}\Omega^2.
    $$ 
    From the convexity of $f$, we obtain
    \begin{align*}
        F(x^*)-F(x^k) &\geq\left\langle\operatorname{grad} f(x^k), R_{x^k}^{-1}(x^*)\right\rangle_{x^k} + g(x^*) - g(x^k) \\&\geq\min_{u \in \mathcal{X}}  \left\langle\grad f(x^k), R^{-1}_{x^k}(u)\right\rangle_{x^k}  +g(u) -g(x^k) = \theta (x^k).
    \end{align*}
    Then we have $0\leq\Delta_k \leq -\theta (x^k)$. 
    So setting $a_k =\Delta_{k}$ and $b_k = -\theta (x^k) \geq 0$, $ \beta_k =\lambda_k$, and $A = L\Omega^2$ in Lemma~\ref{lemma:ak} we obtain
    $$
    F(x^{k})- F(x^*) \leq  \frac{2L}{k}\Omega^2,
    $$
    and
    $$\min _{\ell \in\left\{\left\lfloor\frac{k}{2}\right\rfloor+2, \ldots, k\right\}} -\theta (x^k) \leq \frac{8 L\Omega^2}{k-2},$$ for all $k=3,4, \ldots$, where $\lfloor k / 2\rfloor=\max \{n \in \mathbb{N}: n \leq k / 2\}$.
\end{proof}

The above theorem also indicates that the iteration complexity of the method is $\mathcal{O} \left( 1/k \right)$.
Note that a similar result can be obtained by following the proof of \cite[Corollary~5]{yu2017generalized}.

\subsection{Analysis with the Armijo step size}

The following lemma establishes the existence of intervals of step sizes that satisfy the Armijo condition.
\begin{lemma}\label{lemma:armijo}
    Let $\zeta \in(0,1)$, $x^k \in \mathcal{X}$ and $\theta(x^k) \neq 0$. Then, there exists $0<\bar{\lambda} \leq 1$ such that
    \begin{equation}\label{ineq armijo}
        F(R_{x^k} (\lambda R_{x^k}^{-1} (p(x^k)))) \leq F(x^k)-\zeta \lambda\left|\theta (x^k)\right|  \quad \forall \lambda \in(0, \bar{\lambda}] .
    \end{equation}
\end{lemma}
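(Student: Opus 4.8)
The plan is to derive the Armijo inequality directly from the descent estimate already established in Lemma~\ref{lemma:des}, so almost no new work is needed. First I would record the sign of $\theta(x^k)$: since $u = x^k$ is feasible in the subproblem~\eqref{eq pk}, the optimal value satisfies $\theta(x^k) \le 0$, and combining this with the hypothesis $\theta(x^k) \neq 0$ gives $\theta(x^k) < 0$, i.e. $\theta(x^k) = -\lvert\theta(x^k)\rvert$. This strict negativity is the only place where the assumption $\theta(x^k)\neq 0$ enters, and it is what guarantees the threshold $\bar\lambda$ I construct below is strictly positive.

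Next I would apply Lemma~\ref{lemma:des} with $x = x^k$, $z = p(x^k)$, and $y = R_{x^k}(\lambda R^{-1}_{x^k}(p(x^k)))$, which yields for every $\lambda \in [0,1]$
$$F\bigl(R_{x^k}(\lambda R^{-1}_{x^k}(p(x^k)))\bigr) \le F(x^k) + \lambda\theta(x^k) + \frac{L\lambda^2}{2}\operatorname{diam}(\mathcal{X})^2 = F(x^k) - \lambda\lvert\theta(x^k)\rvert + \frac{L\lambda^2}{2}\operatorname{diam}(\mathcal{X})^2.$$
Comparing this with the target~\eqref{ineq armijo}, it suffices to absorb the quadratic remainder into the available slack $(1-\zeta)\lambda\lvert\theta(x^k)\rvert$, i.e. to require $\tfrac{L\lambda^2}{2}\operatorname{diam}(\mathcal{X})^2 \le (1-\zeta)\lambda\lvert\theta(x^k)\rvert$. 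Dividing by $\lambda>0$, this is equivalent to the linear condition $\lambda \le 2(1-\zeta)\lvert\theta(x^k)\rvert/\bigl(L\operatorname{diam}(\mathcal{X})^2\bigr)$.

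I would then define
$$\bar\lambda := \min\left\{1,\ \frac{2(1-\zeta)\lvert\theta(x^k)\rvert}{L\operatorname{diam}(\mathcal{X})^2}\right\},$$
which is strictly positive because $\zeta \in (0,1)$, $\lvert\theta(x^k)\rvert > 0$ by the previous paragraph, and $\operatorname{diam}(\mathcal{X})$ is finite by compactness of $\mathcal{X}$. For any $\lambda \in (0,\bar\lambda]$ the displayed upper bound together with the defining inequality for $\bar\lambda$ gives exactly $F(R_{x^k}(\lambda R^{-1}_{x^k}(p(x^k)))) \le F(x^k) - \zeta\lambda\lvert\theta(x^k)\rvert$, establishing~\eqref{ineq armijo}.

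I do not anticipate a genuine obstacle: the argument is a single application of Lemma~\ref{lemma:des} followed by elementary rearrangement. The only points that must be stated with care are the strict negativity of $\theta(x^k)$ (ensuring $\lvert\theta(x^k)\rvert>0$ and hence $\bar\lambda>0$) and the finiteness of $\operatorname{diam}(\mathcal{X})$, both of which are already guaranteed by the standing assumptions. If a sharper threshold were desired, one could retain $\dist^2(x^k,p(x^k))$ from the intermediate line of Lemma~\ref{lemma:des} in place of $\operatorname{diam}(\mathcal{X})^2$, but this is an inessential refinement.
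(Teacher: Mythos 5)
Your argument is correct as written, but it takes a genuinely different route from the paper, and the difference matters for the hypotheses. The paper proves Lemma~\ref{lemma:armijo} using only the continuous differentiability of $f$: it writes $f(R_{x^k}(\lambda R_{x^k}^{-1}(p(x^k)))) = f(x^k) + \lambda\langle\grad f(x^k), R_{x^k}^{-1}(p(x^k))\rangle_{x^k} + o(\lambda)$, combines this with the geodesic convexity of $g$ to get $F(\cdot) \le F(x^k) + \lambda\zeta\theta(x^k) + \lambda\bigl((1-\zeta)\theta(x^k) + o(\lambda)/\lambda\bigr)$, and then uses $o(\lambda)/\lambda \to 0$ together with $\theta(x^k) < 0$ to conclude that the bracketed term is eventually nonpositive. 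Your proof instead invokes Lemma~\ref{lemma:des}, which rests on the geodesic $L$-smoothness of $f$ (Assumption~\ref{assu Lsmooth}). This buys you an explicit, quantitative threshold $\bar\lambda = \min\{1,\ 2(1-\zeta)|\theta(x^k)|/(L\operatorname{diam}(\mathcal{X})^2)\}$ rather than a bare existence statement --- essentially the bound the paper only derives later, in Theorem~\ref{thm armijo2} and Lemma~\ref{lemma:lambda}, when it needs complexity estimates. The cost is a strictly stronger hypothesis: Lemma~\ref{lemma:armijo} is used inside Theorem~\ref{thm armijo1}, which assumes only Assumption~\ref{assum lowerbounded} and not $L$-smoothness, so substituting your proof would silently add Assumption~\ref{assu Lsmooth} to the hypotheses of that theorem. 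If you want to keep your cleaner quantitative argument, you should either state the $L$-smoothness requirement explicitly in the lemma or note that the asymptotic Taylor-expansion route is what permits dropping it. (One further minor point: your formula for $\bar\lambda$ implicitly assumes $L\operatorname{diam}(\mathcal{X})^2 > 0$; the degenerate case is trivial but worth a word.)
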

\begin{proof}
    Since $f$ is differentiable and $g$ is $R$-convex, the following holds for all $\lambda \in(0,1)$, 
    $$
    \begin{aligned}
    & F (R_{x^k} (\lambda R_{x^k}^{-1} (p(x^k)))) \\
    = & g (R_{x^k} (\lambda R_{x^k}^{-1} (p(x^k)))) + f(R_{x^k} (\lambda (R_{x^k}^{-1} (p(x^k))))) \\
    \leq & (1-\lambda) g (x^k) + \lambda g (p(x^k)) + f(x^k) + \lambda \left\langle \grad f (x^k), R_{x^k}^{-1} (p(x^k)) \right\rangle_{x^k} + {o}(\lambda) \\
    = & F (x^k) + \lambda \left( \left\langle \grad f (x^k), R_{x^k}^{-1} (p(x^k)) \right\rangle_{x^k} + g (p(x^k)) - g (x^k) \right) + {o}(\lambda) \\
    = & F(x^k) + \lambda \zeta \theta(x^k) + \lambda \left( (1-\zeta) \theta(x^k) + \frac{{o}(\lambda)}{\lambda} \right).
    \end{aligned}
    $$
    The inequality arises from the Taylor expansion of \(f\) and the $R$-convexity of \(g\).
    Since $\lim _{\lambda \rightarrow 0} \frac{{o}(\lambda)}{\lambda}=0$, $\zeta \in(0,1)$ and $\theta(x^k) < 0$ from the definition as in~\eqref{def theta}, there exists $\bar{\lambda} \in(0,1]$ such that $(1-\zeta) \theta(x^k)+\frac{{o}(\lambda)}{\lambda} \leq 0$,
    then \eqref{ineq armijo} holds for all $\lambda \in(0, \bar{\lambda}]$.
\end{proof}

In the subsequent results, we assume that the sequence generated by Algorithm~\ref{alg:RGCG} is infinite, which means that $\theta(x^k)<0$ for all $k$.

\begin{assumption}
\label{assum lowerbounded}
    Define \(\mathcal{L} = \{ x \mid F(x) \leq F(x^0) \}\), and assume that all monotonically nonincreasing sequences in \(F(\mathcal{L})\) are bounded from below.
\end{assumption}

    \begin{theorem}\label{thm armijo1}
        Assume that \(F\) satisfies Assumption~\ref{assum lowerbounded}
        and the sequence \(\{x^k\}\) is generated by Algorithm~\ref{alg:RGCG} with the Armijo step size. The following statements hold:
        \begin{itemize}
            \item[(1)] $\lim _{k \rightarrow \infty} \theta(x^k)=0$;
            \item[(2)]  Let $x^*$ be an accumulation point of the sequence $\{x^k\}$. Then $x^*$ is a stationary point of \eqref{eq:prob} and $\lim _{k \rightarrow \infty} F(x^k)=F(x^*)$.
        \end{itemize}
    \end{theorem}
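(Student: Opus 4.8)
The plan is to establish, in order, a one-step decrease that makes $\{F(x^k)\}$ convergent, a summability consequence that forces $\lambda_k|\theta(x^k)|\to 0$, a backtracking/limiting argument to upgrade this to $\theta(x^k)\to 0$, and finally a continuity argument for the accumulation point. First I would use the infinite-sequence hypothesis $\theta(x^k)<0$ together with the Armijo acceptance rule to write $F(x^{k+1})\le F(x^k)-\zeta\lambda_k|\theta(x^k)|$. This shows $\{F(x^k)\}$ is nonincreasing and $\{x^k\}\subseteq\mathcal{L}$, so by Assumption~\ref{assum lowerbounded} it is bounded below and hence converges to some $F^*$. Telescoping then gives $\zeta\sum_{k\ge 0}\lambda_k|\theta(x^k)|\le F(x^0)-F^*<\infty$, from which $\lambda_k|\theta(x^k)|\to 0$.

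For part (1), I would argue by contradiction. If $\theta(x^k)\not\to 0$, choose $\delta>0$ and a subsequence with $|\theta(x^{k_j})|\ge\delta$; then $\lambda_{k_j}\to 0$, so for large $j$ backtracking was triggered and the last rejected trial step $\tilde\lambda_{k_j}$, which satisfies $\tilde\lambda_{k_j}\le\lambda_{k_j}/\omega_1\to 0$, violates the Armijo condition:
\[
F(R_{x^{k_j}}(\tilde\lambda_{k_j}d(x^{k_j})))>F(x^{k_j})+\zeta\tilde\lambda_{k_j}\theta(x^{k_j}).
\]
Bounding $g$ through its geodesic convexity exactly as in the proof of Lemma~\ref{lemma:armijo}, the $g$-terms cancel against the definition \eqref{def theta} of $\theta$, and dividing by $\tilde\lambda_{k_j}>0$ yields
\[
\frac{f(R_{x^{k_j}}(\tilde\lambda_{k_j}d(x^{k_j})))-f(x^{k_j})}{\tilde\lambda_{k_j}}-\langle\grad f(x^{k_j}),d(x^{k_j})\rangle_{x^{k_j}}>(1-\zeta)|\theta(x^{k_j})|\ge(1-\zeta)\delta.
\]
Using compactness of $\mathcal{X}$ I would pass to a further subsequence with $x^{k_j}\to x^*$, $p(x^{k_j})\to\bar p$, and $d(x^{k_j})=R_{x^{k_j}}^{-1}(p(x^{k_j}))\to\bar d=R_{x^*}^{-1}(\bar p)$, the last convergence coming from the continuity of $R^{-1}$ in Assumption~\ref{assum1}. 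Applying the mean value theorem to $\lambda\mapsto f(R_{x}(\lambda d))$ and invoking $\mathrm{D}R_x(0)=\mathrm{id}$ together with continuity of $\grad f$ and of the metric, both terms on the left-hand side converge to $\langle\grad f(x^*),\bar d\rangle_{x^*}$, so the left-hand side tends to $0$, contradicting the strictly positive lower bound. Hence $\theta(x^k)\to 0$.

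For part (2), let $x^{k_j}\to x^*\in\mathcal{X}$. Since $F$ is continuous on the compact set $\mathcal{X}$, I obtain $F(x^*)=\lim_j F(x^{k_j})=F^*=\lim_k F(x^k)$, which gives the last claim. For stationarity I would note that $\theta$ is the optimal-value function of the subproblem \eqref{eq pk}, namely $\theta(x)=\min_{u\in\mathcal{X}}\{\langle\grad f(x),R_x^{-1}(u)\rangle_x+g(u)\}-g(x)$; the objective being jointly continuous in $(x,u)$ over the compact feasible set, Berge's maximum theorem shows $\theta$ is continuous, so $\theta(x^*)=\lim_j\theta(x^{k_j})=0$ by part (1), i.e.\ $x^*$ is stationary for \eqref{eq:prob}.

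The hard part will be the backtracking limit in part (1): the estimate in Lemma~\ref{lemma:armijo} is only pointwise, with an $o(\lambda)$ remainder depending on $x^k$, so the crux is converting it into a statement that passes to the limit uniformly along the subsequence. My approach replaces the $o(\lambda)$ term by a mean value derivative that is continuous jointly in the point, the step, and the direction, which is exactly what compactness of $\mathcal{X}$ and continuity of $R^{-1}$ and $\grad f$ supply. A secondary point to verify is the continuity of $g$ (equivalently, of $\theta$) on $\mathcal{X}$, which underlies both the identification $F(x^*)=F^*$ and the continuity of the value function used for stationarity.
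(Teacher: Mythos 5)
Your proposal follows the same overall architecture as the paper's proof: Armijo acceptance gives the one-step decrease $F(x^k)-F(x^{k+1})\ge\zeta\lambda_k|\theta(x^k)|$, Assumption~\ref{assum lowerbounded} plus telescoping gives $\sum_k\lambda_k|\theta(x^k)|<\infty$ and hence $\lambda_k|\theta(x^k)|\to 0$, the case $\lambda_k\to 0$ is handled by a contradiction built on the last rejected backtracking trial step $\tilde\lambda_k\le\lambda_k/\omega_1$, and part (2) combines monotonicity of $\{F(x^k)\}$ with a continuity argument for $\theta$. The two places where you deviate are both local and both defensible. First, in the backtracking limit the paper writes a pointwise Taylor expansion $f(R_{x^k}(\bar\lambda_k d(x^k)))=f(x^k)+\bar\lambda_k\langle\grad f(x^k),d(x^k)\rangle_{x^k}+o(\bar\lambda_k)$ and then lets $o(\bar\lambda_k)/\bar\lambda_k\to 0$ as $k\to\infty$; since the remainder depends on $x^k$, this implicitly needs uniformity along the subsequence. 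Your mean-value-theorem formulation, with joint continuity of the derivative of $\lambda\mapsto f(R_x(\lambda d))$ in $(x,\lambda,d)$ over a compact set, makes that step airtight, so your version is actually the more careful one. Second, for stationarity the paper uses a one-sided $\limsup$ argument ($\theta(x^k)\le\langle\grad f(x^k),R^{-1}_{x^k}(u)\rangle_{x^k}+g(u)-g(x^k)$ for all $u$, then pass to the limit and minimize), which only needs lower semicontinuity of $g$; your route via Berge's maximum theorem gives full continuity of $\theta$ but requires $g$ to be continuous, not merely lsc. The same continuity of $g$ is needed for your identification $F(x^*)=F^*$ (the paper is equally terse on this point, invoking only monotonicity); you correctly flag it, and it is harmless here since a geodesically convex finite-valued $g$ is continuous on the interior of its domain, but if one wants the theorem under the stated lsc hypothesis alone, the paper's $\limsup$ argument is the safer choice for the stationarity step.
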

        \begin{proof}
        (1) By the Armijo step size, the fact that $\theta(x^k)<0$ for all $k \in \mathbb{N}$, and by Lemma~\ref{lemma:armijo}, we have
        \begin{equation}\label{eq FFDes}
            F(x^k)- F(x^{k+1}) \geq \zeta \lambda_k\left|\theta(x^k)\right|>0,
        \end{equation}
        which implies that the sequence $\{F(x^k)\}$ is monotonically decreasing. On the other hand, since $\{x^k\} \subset \mathcal{X}$ and $\mathcal{X}$ is compact, there exists $\bar{x} \in \mathcal{X}$ that is a accumulation point of a subsequence of $\{x^k\}$. Without loss of generality, assume that $x^k \rightarrow \bar{x}$.

        From \eqref{eq FFDes}, with $k =0,1,\ldots, N$ for some $N$, we have $F(x^0)-F_{min} \geq F(x^0)-F(x^N) \geq \zeta  \sum_{k=0}^{N} \lambda_k\left|\theta (x^k)\right|$, where $F_{min}$ exists from Assumption~\ref{assum lowerbounded}. Taking $N\rightarrow \infty$, we have $\sum_{k=0}^{\infty} \lambda_k\left|\theta (x^k)\right| < +\infty$. 
        Therefore, we have $\lim _{k \rightarrow\infty}\lambda_k\theta (x^k)=0$.
        If
        \begin{equation}\label{theta 0}
            \lim _{k \rightarrow \infty}\theta (x^k)=0,
        \end{equation}
        then the assertion holds.
        So we consider the case 
        \begin{equation}\label{lambda 0}
            \lim _{k \rightarrow \infty}\lambda_k =0.
        \end{equation}
        Assume that $\lim _{ \mathbb{K}_1 \ni k \to \infty} \lambda_k=0$ for some $\mathbb{K}_1 \subseteq \mathbb{N}$. 
        Suppose by contradiction that $\theta(\bar{x})<0$. Then, there exist $\delta>0$ and $\mathbb{K}_2 \subseteq \mathbb{K}_1$ such that
        $$
        \theta (x^k)<-\delta \quad \forall k \in \mathbb{K}_2 .
        $$
        Without loss of generality, we assume the existence of \(\bar{p} \in \mathcal{X}\) such that:
        $$
        \lim _{\mathbb{K}_2 \ni k \to \infty} p (x^k)=\bar{p}.
        $$
        Since $\lambda_k<1$ for all $k \in \mathbb{K}_2$, by Armijo step size, there exists $\bar{\lambda}_k \in (0, \lambda_k / \omega_1)$ such that
        $$
        F (R_{x^k}  (\bar{\lambda}_k R_{x^k}^{-1}(p (x^k))))>F (x^k)+\zeta \bar{\lambda}_k \theta (x^k) \quad \forall k \in \mathbb{K}_2,
        $$
        Then, we have
        \begin{equation}\label{armijoK2}
            \frac{F (R_{x^k}  (\bar{\lambda}_k R_{x^k}^{-1}( p (x^k))))-F (x^k)}{\bar{\lambda}_k}>\zeta \theta (x^k) \quad \forall k \in \mathbb{K}_2 .
        \end{equation}
        On the other hand, since $0<\bar{\lambda}_k \leq 1$, and $g$ is $R$-convex, then
        $$
        g (R_{x^k}  (\bar{\lambda}_k R_{x^k}^{-1}(p (x^k))) )\leq  (1-\bar{\lambda}_k)g (x^k) + \bar{\lambda}_k g (p (x^k)),
        $$ 
        namely,
        \begin{equation}\label{eq: gconvex}
        \frac{g (R_{x^k}  (\bar{\lambda}_k R_{x^k}^{-1}(p (x^k)))-g (x^k)}{\bar{\lambda}_k} \leq g (p (x^k))-g (x^k) \quad \forall k \in \mathbb{K}_2.
        \end{equation}
        Since $f$ is differentiable and $\lim _{\mathbb{K}_2 \ni k \to \infty} \bar{\lambda}_k=0$, for all $k \in \mathbb{K}_2$, we have
        \begin{equation}\label{eq: simu f}
            f (R_{x^k}  (\bar{\lambda}_k R_{x^k}^{-1}(p (x^k)))) = f (x^k)+\bar{\lambda}_k\left\langle\operatorname{grad} f (x^k), R_{x^k}^{-1}(p (x^k))\right\rangle_{x^k} + o (\bar{\lambda}_k),
        \end{equation}
        which, together with \eqref{eq: gconvex} and
        \eqref{eq: simu f} and the definition of $\theta$, gives
        \begin{equation}\label{eq theta f}
        \begin{aligned}
        \theta (x^k) & = \left\langle\operatorname{grad} f (x^k), R_{x^k}^{-1}(p (x^k))\right\rangle_{x^k} +g (p (x^k))-g (x^k)\\
        & \geq \frac{f (R_{x^k}  (\bar{\lambda}_k R_{x^k}^{-1}(p (x^k)))) -f (x^k)- o (\bar{\lambda}_k)}{\bar{\lambda}_k} +\frac{g (R_{x^k}  (\bar{\lambda}_k R_{x^k}^{-1}(p (x^k)))-g (x^k)}{\bar{\lambda}_k}\\
        & \geq \frac{F (R_{x^k}  (\bar{\lambda}_kR_{x^k}^{-1}(p (x^k))))-F (x^k)}{\bar{\lambda}_k}-\frac{o (\bar{\lambda}_k)}{\bar{\lambda}_k} \quad \forall k \in \mathbb{K}_2 .
        \end{aligned}
        \end{equation}
        Combining \eqref{eq theta f} with \eqref{armijoK2}, we obtain for all  $k \in \mathbb{K}_2$,
        $$
        \frac{F (R_{x^k}  (\bar{\lambda}_kR_{x^k}^{-1}(p (x^k))))-F (x^k)}{\bar{\lambda}_k\zeta}
            > \frac{F (R_{x^k}  (\bar{\lambda}_kR_{x^k}^{-1}(p (x^k))))-F (x^k)}{\bar{\lambda}_k}-\frac{o (\bar{\lambda}_k)}{\bar{\lambda}_k},
            $$
        that is,
            $$
            \left(\frac{1}{\zeta} -1\right)\frac{F (R_{x^k}  (\bar{\lambda}_kR_{x^k}^{-1}(p (x^k))))-F (x^k)}{\bar{\lambda}_k}
            > -\frac{o (\bar{\lambda}_k)}{\bar{\lambda}_k}.
        $$
        Then, we have
        $$
        \frac{F(R_{x^k} \bar{\lambda}_k (R_{x^k}^{-1} (p(x^k))))-F(x^k)}{\bar{\lambda}_k}>\left(\frac{-\zeta}{1-\zeta}\right) \frac{o (\bar{\lambda}_k)}{\bar{\lambda}_k} \quad \forall k \in \mathbb{K}_2 .
        $$
        On the other hand, from the fact that $\theta (x^k)<-\delta$ and \eqref{eq theta f}, we obtain
        $$
        -\delta+\frac{o (\bar{\lambda}_k )}{\bar{\lambda}_k}>\frac{F(R_{x^k} \bar{\lambda}_k R_{x^k}^{-1} (p(x^k)))-F(x^k)}{\bar{\lambda}_k} \quad \forall k \in \mathbb{K}_2 .
        $$
        Combining the two results above, it holds that 
        $$
        -\delta+\frac{o(\bar{\lambda}_k)}{\bar{\lambda}_k}>\left(\frac{-\zeta}{1-\zeta}\right) \frac{o(\bar{\lambda}_k)}{\bar{\lambda}_k} \quad \forall k \in \mathbb{K}_2 .
        $$
        
        \noindent Since $\frac{o(\bar{\lambda}_k)}{\bar{\lambda}_k} \to 0$ as $k \to \infty$, we obtain $\delta \leq0$, which contradicts the fact that $\delta>0$. Thus, $\theta\left(\bar{x}\right)=0$.

        (2) 
        Assume that \( x^* \),  an accumulation point of the sequence \(\{x^k\}\), is not a stationary point of $F$, namely \(\theta\left(x^*\right)<0\). By the definition of \(\theta\) in \eqref{def theta}, we have
        \begin{align*}
            \theta\left(x^k\right) &= \left\langle \grad f(x^k), R^{-1}_{x^k}(p(x^k)) \right\rangle_{x^k} + g(p(x^k)) - g(x^k)\\
            &\leq \left\langle \grad f(x^k), R^{-1}_{x^k}(u) \right\rangle_{x^k} + g(u) - g(x^k) \quad \forall u \in \mathcal{X},
        \end{align*}
        and by taking the limit we obtain
        $$
        \limsup_{k \rightarrow \infty} \theta\left(x^k\right) \leq \left\langle \grad f(x^*), R^{-1}_{x^*}(u) \right\rangle_{x^*} + g(u) - g(x^*) \quad \forall u \in \mathcal{X},
        $$
        since $f$ is continuously differentiable, $g$ is lower semicontinuous and Assumption~\ref{assum1} holds.
        Now taking the minimum over $x\in \mathcal{X}$ and because \( x^* \) is not stationary point of $F$, 
        $$
        \limsup_{k \rightarrow \infty} \theta\left(x^k\right) \leq \min_{u \in \mathcal{X}} \left\langle \grad f(x^*), R^{-1}_{x^*}(u) \right\rangle_{x^*} + g(u) - g(x^*)=\theta\left(x^*\right)<0,
        $$
        which contradicts to $\lim _{k \rightarrow \infty} \theta(x^k)=0$, 
        so \( x^* \) is a stationary point of $F$.
        
        Furthermore, from the monotonicity of the sequence $\{F(x^k)\}$ given in \eqref{eq FFDes}, we have $\lim _{k \rightarrow \infty} F(x^k)=F\left(x^*\right)$, which completes the proof.
    \end{proof}

The following theorem provides the iteration complexity of Algorithm~\ref{alg:RGCG} with Armijo step size, indicating that after a finite number of steps, the iteration complexity of the method could be $\mathcal{O} \left( 1/\epsilon^2 \right)$.

    \begin{theorem}\label{thm armijo2}

        Assume that  $F$  satisfies Assumptions \ref{assu Lsmooth} and ~\ref{assum lowerbounded}. Then, there exists a finite integer $J\in \mathbb{N}$ such that Algorithm~\ref{alg:RGCG} with Armijo step size achieves an $\epsilon$-optimal solution within  $J + \mathcal{O} \left( 1/\epsilon^2 \right)$  iterations.

    \end{theorem}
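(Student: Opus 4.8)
The plan is to quantify the per-iteration decrease of $F$ forced by the Armijo line search and then split the run of the algorithm into two regimes according to the size of the stationarity measure $|\theta(x^k)|$, recalling that an $\epsilon$-optimal point is one with $-\theta(x^k)=|\theta(x^k)|\le\epsilon$. The central ingredient is a lower bound on the accepted step size $\lambda_k$ in terms of $|\theta(x^k)|$; once this is in hand, the result follows by telescoping the monotone decrease of $\{F(x^k)\}$, which is bounded below by some $F_{\min}$ by Assumption~\ref{assum lowerbounded}.

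First I would establish the step-size lower bound. Applying Lemma~\ref{lemma:des} with $y=R_{x^k}(\lambda R_{x^k}^{-1}(p(x^k)))$ gives $F(y)\le F(x^k)+\lambda\theta(x^k)+\frac{L\lambda^2}{2}\operatorname{diam}(\mathcal{X})^2$. Imposing the Armijo test $F(y)\le F(x^k)+\zeta\lambda\theta(x^k)$ and using $\theta(x^k)<0$, a short computation shows the test holds for every
\[
\lambda\le\bar\lambda:=\min\left\{1,\ \frac{2(1-\zeta)\,|\theta(x^k)|}{L\operatorname{diam}(\mathcal{X})^2}\right\}.
\]
Since the backtracking rule scales a rejected trial by at least $\omega_1$, while any trial below $\bar\lambda$ is accepted, the accepted step satisfies $\lambda_k\ge\omega_1\bar\lambda$. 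Substituting into the Armijo descent inequality~\eqref{eq FFDes}, namely $F(x^k)-F(x^{k+1})\ge\zeta\lambda_k|\theta(x^k)|$, yields
\[
F(x^k)-F(x^{k+1})\ \ge\ \zeta\omega_1\min\left\{|\theta(x^k)|,\ \frac{2(1-\zeta)}{L\operatorname{diam}(\mathcal{X})^2}\,|\theta(x^k)|^2\right\}.
\]

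Next I would set the threshold $\Theta:=\frac{L\operatorname{diam}(\mathcal{X})^2}{2(1-\zeta)}$ and separate iterations into a first regime where $|\theta(x^k)|\ge\Theta$ and a second where $\epsilon<|\theta(x^k)|<\Theta$. In the first regime the minimum above equals $|\theta(x^k)|$, so each such iteration decreases $F$ by at least $\zeta\omega_1\Theta$; since $\sum_k\big(F(x^k)-F(x^{k+1})\big)\le F(x^0)-F_{\min}$, the number of such iterations over the whole run is at most $J:=\lceil (F(x^0)-F_{\min})/(\zeta\omega_1\Theta)\rceil$, a finite integer independent of $\epsilon$. In the second regime the minimum equals the quadratic term, so every iteration with $|\theta(x^k)|>\epsilon$ decreases $F$ by more than $\frac{2\zeta\omega_1(1-\zeta)}{L\operatorname{diam}(\mathcal{X})^2}\,\epsilon^2=:c\,\epsilon^2$; the same telescoping bounds their number by $(F(x^0)-F_{\min})/(c\,\epsilon^2)=\mathcal{O}(1/\epsilon^2)$. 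Adding the two counts shows the first index $k$ with $|\theta(x^k)|\le\epsilon$ is reached within $J+\mathcal{O}(1/\epsilon^2)$ iterations.

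The main obstacle is the step-size lower bound: one must combine the $L$-smooth upper model of Lemma~\ref{lemma:des} with the combinatorics of backtracking (the factor $\omega_1$ and the fact that rejection only occurs above $\bar\lambda$) to obtain a bound of the form $\lambda_k\gtrsim|\theta(x^k)|$. The rest is a careful but routine two-regime telescoping; the only subtlety is to keep the finite, $\epsilon$-independent count $J$ separate from the $\mathcal{O}(1/\epsilon^2)$ count so that the bound takes the claimed additive form.
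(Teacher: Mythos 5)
Your argument is correct, and it reaches the stated bound by a route that differs from the paper's in one substantive respect. The paper first shows (for the case $0<\lambda_k<1$) that the last \emph{rejected} trial $\bar\lambda_k\le\lambda_k/\omega_1$ must violate both the Armijo test and the upper model of Lemma~\ref{lemma:des}, which yields $\lambda_k>\gamma|\theta(x^k)|$ with $\gamma=\tfrac{2\omega_1(1-\zeta)}{L\operatorname{diam}(\mathcal{X})^2}$; to cover the case $\lambda_k=1$ it then invokes Theorem~\ref{thm armijo1} ($\theta(x^k)\to 0$) to assert that for all $k>J$ one has $|\theta(x^k)|\le 1/\gamma$, so the same bound holds, and the quadratic decrease $F(x^k)-F(x^{k+1})\ge\zeta\gamma|\theta(x^k)|^2$ is summed only over $k>J$. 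The $J$ obtained this way is non-explicit. You instead prove the uniform lower bound $\lambda_k\ge\omega_1\min\bigl\{1,\,|\theta(x^k)|/\Theta\bigr\}$ valid at \emph{every} iteration (the accepted step is either the initial trial $1$ or at least $\omega_1$ times a rejected trial, and rejected trials exceed the threshold guaranteed by Lemma~\ref{lemma:des}), and then split iterations by the size of $|\theta(x^k)|$ rather than by the value of $\lambda_k$. This buys you two things: the proof is self-contained (no appeal to the asymptotic Theorem~\ref{thm armijo1}), and $J$ becomes the explicit, $\epsilon$-independent quantity $\lceil(F(x^0)-F_{\min})/(\zeta\omega_1\Theta)\rceil$ counting the iterations with $|\theta(x^k)|\ge\Theta$, each of which forces a fixed decrease of $F$. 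The telescoping against $F(x^0)-F_{\min}$ (Assumption~\ref{assum lowerbounded}) in each regime is the same mechanism as the paper's summation of \eqref{eq FFDes}. Both proofs are valid; yours is arguably the cleaner and more informative of the two.
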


    \begin{proof}
        First, we consider that $0<\lambda_k<1$. From the Armijo step size condition, there exists \(0 < \bar{\lambda}_k \leq \min \left\{1, \frac{\lambda_k}{\omega_1}\right\}\) such that
        $$
        F(R_{x^k} \bar{\lambda}_k(R_{x^k}^{-1}(p (x^k))))>F(x^k)+\zeta \bar{\lambda}_k \theta(x^k) .
        $$
        
        Recall that from Lemma \ref{lemma:des}, we have
        $$
        F(R_{x^k} (\bar{\lambda}_k(R_{x^k}^{-1}(p (x^k))))) < F(x^k)+ \bar{\lambda}_k\theta (x^k) +\frac{L\bar{\lambda}_k^2}{2}\Omega^2.
        $$
        Combining the above two inequalities, we obtain
        $$
        \zeta \bar{\lambda}_k\theta(x^k)< \bar{\lambda}_k\theta (x^k) +\frac{L\bar{\lambda}_k^2}{2}\Omega^2,
        $$
        namely,
        $$
        0 < -\frac{2(1-\zeta)}{L\Omega^2}\theta (x^k) < \bar{\lambda}_k\leq \frac{\lambda_k}{\omega_1}.
        $$
        Defining $\gamma=\frac{2\omega_1(1-\zeta)}{L\Omega^2}$, we get
        \begin{equation}\label{ineq lambda}
            0 < -\gamma\theta (x^k) < \lambda_k.
        \end{equation}

        From Theorem~\ref{thm armijo1}, we have $\lim _{k \rightarrow \infty} \theta(x^k)=0$. Then, without loss of generality, for sufficiently large \(J\), when \(k > J\), \(\left|\theta(x^k)\right| \leq 1/\gamma \). Then \eqref{ineq lambda} still holds for $\lambda_k =1$. 
        Taking it into \eqref{eq FFDes}, we conclude that
        \begin{equation}\label{eq gamaff}
            0 < \zeta \gamma\left|\theta(x^k)\right|^2  \leq F(x^k)-F(x^{k+1}).
        \end{equation}
        Namely \eqref{eq gamaff} holds for all $k > J$.
        By summing both sides of the second inequality in \eqref{eq gamaff} for $k= J+1, \ldots, N$, we have
        $$
        \sum_{k=J+1}^{N}\left|\theta(x^k)\right|^2 \leq \frac{1}{\zeta \gamma}(F(x^{J+1})-F(x^{N+1}))\leq \frac{1}{\zeta \gamma}(F(x^0)-F(x^{*})),
        $$
        where $x^*$ is an accumulation point of $\{x^k\}$, as in Theorem~\ref{thm armijo1}.
        
        This implies $\min_k \left\{\left|\theta(x^k)\right| \mid k= J+1, \ldots, N\right\} \leq \sqrt{(F(x^0)-F(x^{*})) /(\zeta \gamma (N-J))}$.
        Namely, Algorithm \ref{alg:RGCG} generates an iterate $x^k$ satisfying $\left|\theta(x^k)\right| \leq \epsilon$ in at most $J + (F(x^0)-F\left(x^*\right)) /(\zeta \gamma \epsilon^2)$ iterations.
    \end{proof}

    Furthermore, we can obtain stronger results by making additional assumptions about the objective function $g$, similarly to \cite[Section 3 (A4)]{assunccao2023generalized}.
    \begin{assumption}\label{assu:Lg}
    \begin{sloppypar}
    Assume the function $g$ is Lipschitz continuous with constant $L_g>0$ in $\mathcal{X}$, namely, $|g(x) - g(y)| \leq L_g \left\|{R}_x^{-1}(y)\right\|_x$. 
    \end{sloppypar}
    \end{assumption}

    Moreover, we define
    \begin{equation}\label{rho gamma}
    \begin{aligned}
    & \rho:= \sup \left\{\left\|\operatorname{grad} f(x)\right\|_x \mid x \in \mathcal{X} \right\} \\
    & \gamma:=\min \left\{\frac{1}{(L_g+\rho) \Omega}, \frac{2 \omega_1(1-\zeta)}{L \Omega^2}\right\} .
    \end{aligned}
    \end{equation}
Next, we introduce the following lemma, which is crucial for the subsequent convergence rate analysis.
    \begin{lemma}\label{lemma:lambda}
        Let $\{x^k\}$ be a sequence generated by Algorithm~\ref{alg:RGCG} with Armijo step size. Assume that  Assumptions \ref{assu Lsmooth}, \ref{assu:omega} and \ref{assu:Lg} hold. Then $\lambda_k\geq \gamma \left|\theta\left(x^k\right)\right| > 0.$
    \end{lemma}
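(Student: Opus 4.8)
The plan is to argue by cases according to whether the backtracking line search accepts the initial trial value $\lambda_k=1$ or terminates at some $\lambda_k\in(0,1)$, and to show that $\lambda_k\geq\gamma\lvert\theta(x^k)\rvert$ in each case, with $\gamma$ realizing whichever of its two defining terms in~\eqref{rho gamma} is active. Note throughout that the sequence is assumed infinite, so $\theta(x^k)<0$ and $\lvert\theta(x^k)\rvert=-\theta(x^k)>0$.

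First I would treat the case $\lambda_k=1$. Here the goal reduces to showing $\gamma\lvert\theta(x^k)\rvert\leq 1$, and since $\gamma\leq 1/((L_g+\rho)\Omega)$ it suffices to bound $\lvert\theta(x^k)\rvert\leq(L_g+\rho)\Omega$. Using the definition of $\theta$ in~\eqref{def theta}, the triangle inequality, the Cauchy--Schwarz estimate $\lvert\langle\grad f(x^k),R^{-1}_{x^k}(p(x^k))\rangle_{x^k}\rvert\leq\|\grad f(x^k)\|_{x^k}\,\|R^{-1}_{x^k}(p(x^k))\|_{x^k}\leq\rho\Omega$, and the Lipschitz continuity of $g$ giving $\lvert g(p(x^k))-g(x^k)\rvert\leq L_g\dist(p(x^k),x^k)\leq L_g\operatorname{diam}(\mathcal{X})\leq L_g\Omega$, I obtain the required bound, whence $\gamma\lvert\theta(x^k)\rvert\leq 1=\lambda_k$. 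This is the step that genuinely uses Assumption~\ref{assu:Lg}, both through $L_g$ and through $\Omega$ dominating the inverse-retraction norms and the diameter; it is the main obstacle, since for a general retraction $\|R^{-1}_{x^k}(p(x^k))\|_{x^k}$ is not the geodesic distance and so must be controlled by $\Omega$ directly rather than through $\operatorname{diam}(\mathcal{X})$.

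Next I would treat the case $0<\lambda_k<1$, which essentially recycles the estimate already carried out in the proof of Theorem~\ref{thm armijo2}. Since $\lambda_k<1$, the line search rejected some trial value, so there is $\bar\lambda_k\leq\min\{1,\lambda_k/\omega_1\}$ violating the Armijo condition, namely $F(R_{x^k}(\bar\lambda_k R^{-1}_{x^k}(p(x^k))))>F(x^k)+\zeta\bar\lambda_k\theta(x^k)$. Combining this with the descent estimate of Lemma~\ref{lemma:des} applied at $\bar\lambda_k$, dividing by $\bar\lambda_k>0$, and using $\theta(x^k)<0$ together with $\operatorname{diam}(\mathcal{X})\leq\Omega$, I deduce $(1-\zeta)\lvert\theta(x^k)\rvert<\tfrac{L\bar\lambda_k}{2}\Omega^2$, hence $\bar\lambda_k>\tfrac{2(1-\zeta)}{L\Omega^2}\lvert\theta(x^k)\rvert$. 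Using $\bar\lambda_k\leq\lambda_k/\omega_1$ this yields $\lambda_k>\tfrac{2\omega_1(1-\zeta)}{L\Omega^2}\lvert\theta(x^k)\rvert\geq\gamma\lvert\theta(x^k)\rvert$, where the last inequality holds because $\gamma\leq 2\omega_1(1-\zeta)/(L\Omega^2)$.

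Finally, I would combine the two cases: in both $\lambda_k\geq\gamma\lvert\theta(x^k)\rvert$, and since $\theta(x^k)<0$ we have $\gamma\lvert\theta(x^k)\rvert>0$, which gives $\lambda_k\geq\gamma\lvert\theta(x^k)\rvert>0$ and completes the proof.
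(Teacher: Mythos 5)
Your proposal is correct and follows essentially the same route as the paper's proof: the same case split on $\lambda_k=1$ versus $\lambda_k\in(0,1)$, the same bound $\lvert\theta(x^k)\rvert\leq(L_g+\rho)\Omega$ via Cauchy--Schwarz and the Lipschitz continuity of $g$ in the first case, and the same combination of the failed Armijo trial step with Lemma~\ref{lemma:des} in the second. No gaps.
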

    \begin{proof}
        Since \(\lambda_k \in (0,1]\), we consider two cases: \(\lambda_k = 1\) and \(0 < \lambda_k < 1\).  
        First, assume that \(\lambda_k = 1\).
        From \eqref{def theta}, we have
        $$
        0<-\theta (x^k) = g (x^k)-g (p (x^k))-\left\langle\operatorname{grad} f (x^k), R_{x^k}^{-1} (p (x^k))\right\rangle_{x^k},
        $$
        thus, it follows from Assumption~\ref{assu:Lg} and the Cauchy-Schwarz inequality that
        $$
        0<-\theta (x^k) \leq L_g \left\|{R}^{-1}_{x^k}(p (x^k))\right\|_{x^k} 
        +\left\|\operatorname{grad} f (x^k)\right\|_{x^k}\left\|R_{x^k}^{-1} (p(x^k))\right\|_{x^k} .
        $$
        
        Using \eqref{rho gamma}, we have $0<-\theta (x^k) \leq (L_g+\rho) \Omega$. Furthermore,
        $$
        0<\gamma |\theta (x^k)| = -\gamma \theta (x^k) \leq \frac{-\theta (x^k)}{ (L_g+\rho) \Omega} \leq 1,
        $$
        which demonstrates that the desired inequality holds.
        
        Now, consider $0<\lambda_k<1$. From the Armijo step size,  there exists $0<\bar{\lambda}_k \leq \min \left\{1, \frac{\lambda_k}{\omega_1}\right\}$ such that
        $$
        F (R_{x^k} (\bar{\lambda}_k R_{x^k}^{-1} (p (x^k))))>F (x^k)+\zeta \bar{\lambda}_k \theta (x^k) .
        $$
        By using Lemma \ref{lemma:des}, we have
        $$
        F (R_{x^k} (\bar{\lambda}_k R_{x^k}^{-1} (p (x^k)))) \leq F (x^k)+\bar{\lambda}_k \theta (x^k)+\frac{L}{2}\Omega^2\bar{\lambda}_k^2.
        $$
        From the above two inequalities, we have that
        $$
        -\theta (x^k)(1-\zeta)
        <\frac{L}{2}\Omega^2 \bar{\lambda}_k 
        \leq \frac{L}{2}\Omega^2 \frac{\lambda_k}{\omega_1} .
        $$
        Therefore, from \eqref{rho gamma}, we get
        $$
        0<\gamma |\theta (x^k)| = -\gamma \theta (x^k) \leq-\frac{2 \omega_1(1-\zeta)}{L \Omega^2} \theta (x^k)<\lambda_k,
        $$
        which concludes the proof.    
    \end{proof}

        \begin{theorem}
        Let $\{x^k\}$ be a sequence generated by Algorithm~\ref{alg:RGCG} with Armijo step size. Assume that Assumptions \ref{assu Lsmooth}, \ref{assu:omega} and \ref{assu:Lg} hold. Then $\lim _{k \rightarrow \infty} F(x^k)=F(x^*)$, for some $x^* \in$ $\mathcal{X}$. Moreover, the following statements hold:
        \begin{itemize}
            \item[(1)] $\lim _{k \rightarrow \infty} \theta (x^k )=0$;
            \item[(2)] $\min \left\{\left|\theta (x^k )\right| \mid k=0,1, \ldots, N-1\right\} \leq \sqrt{F(x^0)-F(x^{*}) /(\zeta \gamma N)}$.
        \end{itemize}    
    \end{theorem}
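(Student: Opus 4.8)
The plan is to combine the Armijo descent estimate \eqref{eq FFDes} with the uniform step-size lower bound of Lemma~\ref{lemma:lambda} to obtain a single quadratic descent inequality valid for \emph{every} iteration, and then to telescope it. Concretely, \eqref{eq FFDes} gives $F(x^k)-F(x^{k+1}) \geq \zeta \lambda_k |\theta(x^k)|$, while Lemma~\ref{lemma:lambda} gives $\lambda_k \geq \gamma|\theta(x^k)|$; multiplying these yields
$$
F(x^k)-F(x^{k+1}) \geq \zeta \gamma \left|\theta(x^k)\right|^2 \geq 0 \quad \text{for all } k.
$$
The crucial point, compared with Theorem~\ref{thm armijo2}, is that this holds from $k=0$ with no finite burn-in phase $J$, precisely because Lemma~\ref{lemma:lambda} controls $-\theta(x^k)$ even in the case $\lambda_k=1$.

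Next I would establish convergence of $\{F(x^k)\}$. The inequality above shows this sequence is monotonically nonincreasing. Since $f$ is continuously differentiable and $g$ is Lipschitz continuous by Assumption~\ref{assu:Lg}, the function $F$ is continuous on the compact set $\mathcal{X}$, hence bounded below; therefore $\{F(x^k)\}$ converges. Using compactness of $\mathcal{X}$, I would extract an accumulation point $x^* \in \mathcal{X}$ and a subsequence $x^{k_j}\to x^*$. Continuity of $F$ gives $F(x^{k_j})\to F(x^*)$, and since the whole sequence $\{F(x^k)\}$ converges, its limit must coincide with $F(x^*)$, proving $\lim_{k\to\infty}F(x^k)=F(x^*)$.

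Finally I would sum the quadratic descent inequality. Telescoping from $k=0$ to $N-1$ and using $F(x^N)\geq F(x^*)$ (by monotonicity and the identified limit) gives
$$
\zeta \gamma \sum_{k=0}^{N-1}\left|\theta(x^k)\right|^2 \leq F(x^0)-F(x^N) \leq F(x^0)-F(x^*).
$$
Letting $N\to\infty$ shows $\sum_{k=0}^\infty |\theta(x^k)|^2 < \infty$, whence $\theta(x^k)\to 0$, which is statement (i). For (ii), I would bound the left-hand sum from below by $N\min_{0\leq k\leq N-1}|\theta(x^k)|^2$, divide by $\zeta\gamma N$, and take square roots to obtain $\min_{0\leq k\leq N-1}|\theta(x^k)| \leq \sqrt{(F(x^0)-F(x^*))/(\zeta\gamma N)}$.

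The main obstacle is essentially already dispatched by Lemma~\ref{lemma:lambda}: the genuine difficulty is bounding the step size below uniformly in $k$, which rests on the Lipschitz continuity of $g$ in Assumption~\ref{assu:Lg} to handle the $\lambda_k=1$ case. Once that uniform bound is in hand, the remainder is a standard Frank--Wolfe telescoping argument, so I anticipate no further subtleties beyond verifying the continuity and compactness needed to identify the limit $F(x^*)$.
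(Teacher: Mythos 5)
Your proposal is correct and follows essentially the same route as the paper: combine the Armijo descent inequality with the uniform step-size lower bound of Lemma~\ref{lemma:lambda} to get $F(x^k)-F(x^{k+1})\geq \zeta\gamma|\theta(x^k)|^2$ for all $k$, identify the limit $F(x^*)$ via compactness and continuity of $F$ (using the Lipschitz continuity of $g$), and telescope. The only cosmetic difference is that you deduce (i) from summability of $|\theta(x^k)|^2$ whereas the paper reads it off directly from the descent inequality once $F(x^k)\to F(x^*)$ is known; both are equivalent.
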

    \begin{proof}
        \begin{equation}\label{eq FDes}
            F(x^k)- F(x^{k+1}) \geq \zeta \lambda_k\left|\theta (x^k )\right|>0.
        \end{equation}
        It implies that the sequence $\{F (x^k)\}_{k \in \mathbb{N}}$ is monotone decreasing.
        Combined with Lemma \ref{lemma:lambda}, we have
        \begin{equation}\label{eq addd}
            0 < \zeta \gamma\left|\theta (x^k )\right|^2  \leq F (x^k )-F (x^{k+1} ).
        \end{equation}
         Since \( \{x^k\}_{k \in \mathbb{N}} \subset \mathcal{X} \) and \( \mathcal{X} \) is compact, there exists a limit point \( x^* \in \mathcal{X} \) of the sequence \( \{x^k\}_{k \in \mathbb{N}} \). Let \( \{x^{k_j}\}_{j \in \mathbb{N}} \) denote a subsequence of \( \{x^k\}_{k \in \mathbb{N}} \) such that \( \lim_{j \to \infty} x^{k_j} = x^* \).
         Since $ \{x^{k_j}\}_{j \in \mathbb{N}} \subset \mathcal{X}$ and $g$ is Lipschitz continuous, we have
        $$
        \left\|F (x^{k_j})-F (x^*)\right\|=\left\|g (x^{k_j})-g (x^*)+f (x^{k_j})-f (x^*)\right\| \leq L_g\left\|x^{k_j}-x^*\right\|+\left\|f (x^{k_j})-f (x^*)\right\|,
        $$
        for all $j \in \mathbb{N}$. Considering that $f$ is continuous and $\lim _{j \rightarrow \infty} x^{k_j}=x^*$, we conclude from the last inequality that $\lim _{j \rightarrow \infty} F (x^{k_j})=F (x^*)$. 
        Thus, from the monotonicity of the sequence $ \{F (x^k)\}_{k \in \mathbb{N}}$, we have $\lim _{k \rightarrow \infty} F (x^k)=F (x^*)$. Therefore, from \eqref{eq addd}, $\lim _{k \rightarrow \infty}\left|\theta (x^k)\right|=0$, which implies item (1). 

        By summing both sides of the second inequality in \eqref{eq addd} for $k=0,1, \ldots, N-1$, we conclude that
        $$
        \sum_{k=0}^{N-1}\left|\theta (x^k)\right|^2 \leq \frac{1}{\zeta \gamma}(F(x^0)-F(x^{N}))\leq \frac{1}{\zeta \gamma}(F(x^0)-F(x^{*})),
        $$
        which implies the item (2).
        
        Therefore, $x^k$ satisfies $\left|\theta (x^k)\right| \leq \epsilon$ in at most $ (F (x^0)-F (x^*)) / (\zeta \gamma \epsilon^2)$ iterations.
    \end{proof}

\begin{remark}
     It is worth noting that our framework naturally recovers the classical setting of geodesic convexity. Specifically, when the retraction in this paper is chosen to be the exponential map, our $R$-convexity requirement on the function $g$ reduces to standard geodesic convexity. 
     It follows that theoretical results (Lemma~\ref{lemma:des}, Proposition~\ref{prop:des}, and Theorem~\ref{thm:ada dimi}) remain valid without Assumption~\ref{assu:omega}, with the constant $\Omega$ in the convergence bounds being replaced by  $\operatorname{diam}(\mathcal{X})$.
    \end{remark}

\section{Solving the subproblem}\label{sec: subproblem}
In this section, we will propose an algorithm to solve the subproblem of the RGCG method.
Recall that in Algorithm~\ref{alg:RGCG}, given the current iterate $x$,
we need to compute $p(x)$ and $\theta(x)$ as
\begin{equation}\label{pro pxk}
    p(x) = \arg\min_{u \in \mathcal{X}}  \left\langle\grad f(x), R^{-1}(u)\right\rangle_{x}  +g(u) -g(x),
\end{equation}
\begin{equation}
    \theta(x) =  \left\langle\grad f(x), R^{-1}_{x}(p(x)) \right\rangle_{x} +g(p(x)) -g(x).
\end{equation}
Now define $\tilde{p}(x)$ as the descent direction on the tangent space, i.e.,
\begin{equation}\label{pro dk}
    \tilde{p}(x) = \arg\min_{d \in T_{x}\mathcal{M}}  \left\langle\grad f(x), d \right\rangle_{x}  +g(R_{x}(d)).
\end{equation}
We also define
\begin{equation}
    \ell_x(d) =  \left\langle\grad f(x), d \right\rangle_{x} +g(R_{x}(d)). 
\end{equation}
We observe that for both \eqref{pro pxk} and \eqref{pro dk}, the retraction is not necessarily a convex function, which implies that the resulting optimization problem is not necessarily convex.
Non-convex problems are inherently more challenging to solve. To address this, we aim to transform the subproblem into a series of convex problems, similar to the general method for solving Riemannian proximal mappings proposed in \cite{huang_riemannian_2022}. To ensure that we can locally approximate and simulate the subproblems effectively, we consider the following assumption:

\begin{assumption}\label{assump6}
 Assume that the manifold \(\mathcal{M}\) is either an embedded submanifold of \(\mathbb{R}^n\) or a quotient manifold whose total space is an embedded submanifold of \(\mathbb{R}^n\).
\end{assumption} 

Let us now present a way to find the approximation of $\tilde{p}(x) = \arg\min_{d \in T_{x}\mathcal{M}} \ell_x(d)$. Assume that $d^0  \in T_{x}\mathcal{M}$, and let \( d^k \) denote the current estimate of \( \tilde{p}(x) \). Observe that
\begin{align*}
    \ell_x(d^k+\tilde{\xi}_k)
    &= \left\langle\operatorname{grad} f\left(x\right), d^k +\tilde{\xi}_k\right\rangle_{x} + g(R_{x}(d^k+\tilde{\xi}_k)) \\
    &=\left\langle\operatorname{grad} f\left(x\right), d^k \right\rangle_{x}+\left\langle\operatorname{grad} f\left(x\right), \tilde{\xi}_k\right\rangle_{x} + g(R_{x}(d^k+\tilde{\xi}_k)),
\end{align*}
for any $\tilde{\xi}_k \in \mathrm{T}_x \mathcal{M}$.
Let $y_k=R_x\left(d^k\right)$ and $\xi_k=\mathcal{T}_{R_{d^k}} \tilde{\xi}_k$. By the smoothness property of the retraction $R$, 
we obtain $R_x (d^k+\tilde{\xi}_k)=y_k+\xi_k+O\left(\left\|\xi_k\right\|_x^2\right)$, where $y=x+O(z)$ means $\lim \sup _{z \rightarrow 0}\|y-x\| /\|z\|<\infty$. Then, we obtain:
$$
\begin{aligned}
\ell_x(d^k+\tilde{\xi}_k)
= & \left\langle\operatorname{grad} f(x), d^k\right\rangle_x +\left\langle\operatorname{grad} f(x), \mathcal{T}_{R_{d^k}}^{-1} \xi_k\right\rangle_x+g\left(y_k+\xi_k+O\left(\left\|\xi_k\right\|_x^2\right)\right)  \\
= & \left\langle\operatorname{grad} f(x), d^k\right\rangle_x  +\left\langle\operatorname{grad} f(x), \mathcal{T}_{R_{d^k}}^{-1} \xi_k\right\rangle_x+g\left(y_k+\xi_k\right)+O\left(\left\|\xi_k\right\|_x^2\right),  
\end{aligned}
$$
where the second equality follows from Assumption~\ref{assu:Lg}, which states the Lipschitz continuity of \(g\). Note that the expression
\begin{equation}\label{sub_tran}
\left\langle\operatorname{grad} f(x), \mathcal{T}_{R_{d^k}}^{-1} \xi_k\right\rangle_{x}+g\left(y_k+\xi_k\right)
\end{equation}
can be viewed as a simple local model of \( \ell_x\left(d^k + \mathcal{T}_{R_{d^k}}^{-1} \xi_k\right) \). 
Thus, to obtain a new estimate from \( d^k \), we first determine a search direction \( \xi_k^* \) by minimizing \eqref{sub_tran} on \( \mathrm{T}_{y_k} \mathcal{M} \). Then, we update \( d^k \) in the direction of \( \mathcal{T}_{R_{d^k}}^{-1}(\xi_k^*) \).
As described in Algorithm~\ref{alg: subproblem}.

It is easy to find that Algorithm~\ref{alg: subproblem} is the application of the Riemannian GCG method for problem \eqref{pro dk} with a proper retraction. Moreover, note that \eqref{eq subsub} can be solved by the subgradient method or proximal gradient method.

 \begin{algorithm}[H]
      \caption{Solving the subproblem}
      \label{alg: subproblem}
      \noindent Require: An initial iterate $d^0  \in T_{x}\mathcal{M}$; a small positive constant $\sigma$; \\
        1: for $k=0, 1, \ldots$,  do\\
        2: \quad $y_k=R_x\left(d^k\right)$\\
        3: \quad Compute $\xi_k^*$ by solving 
        \begin{equation}\label{eq subsub}
            \xi_k^*=\arg \min _{\xi \in \mathrm{T}_{y_k} \mathcal{M}}  \left\langle\mathcal{T}_{R_{d^k}}^{-\sharp}\left(\operatorname{grad} f(x)\right), \xi\right\rangle_{y_k}+g\left(y_k+\xi\right);
        \end{equation}
        4: \quad$\alpha=1$\\
        5: \quad while \quad$\ell_x\left(d^k+\alpha \mathcal{T}_{R_{d^k}}^{-1} \xi_k^*\right) \geq \ell_x\left(d^k\right)-\sigma \alpha\left\|\xi_k^*\right\|_x^2$ \quad do\\
        6: \quad \quad $\alpha=\frac{1}{2} \alpha $; \\
        7: \quad end while\\
        8: \quad $d_{k+1}=d^k+\alpha \mathcal{T}_{R_{d^k}}^{-1} \xi_k^*$;\\
        4: end for
\end{algorithm}

\section{Accelerated conditional gradient methods}\label{sec: accelerate}
Now we consider the accelerated version of the RCG method. We noticed that based on the accelerated gradient methods, Li et al.  \cite{li2021momentum} establish links between subproblems of conditional gradient methods and the notion of momentum. They proposed a momentum variant of CG method and proved that it converges with a fast rate of $O(\frac{1}{k^2})$. However, the extension of this work to the Riemannian manifold case is not trivial, even though there are works on Nesterov accelerated methods on Riemannian manifolds \cite{alimisis2021momentum}.  Based on these existing works,  here we propose an accelerated CG method on Riemannian manifolds for composite function.

\begin{algorithm}[H]
    \caption{Riemannian accelerated GCG method }\label{alg:RMGCG3}
    \textbf{Step 0. Initialization}
     Choose $x^0 = p_0\in \mathcal{X}$, $\tau^k = \frac{2}{k+3}$ and initialize $k = 0$, $\beta_0=\mathbf{0}, d^0 =\mathbf{0}$.

     \textbf{Step 1. }
     Compute
     $$y_k = R_{x^k}(\tau^k R^{-1}_{x^k}(d^k)) $$
     $$\beta_{k+1} = \mathcal{T}_{y_k}(1-\tau^k)\beta_k + \lambda_k\grad f(y_k)$$
     
     \textbf{Step 2. }
     Compute $p_k$ and $\theta(x^k)$ as
     \[ p_{k+1} = \arg\min_{u \in \mathcal{X}}  \left\langle\beta_{k+1}, R_{y_k}^{-1}(u)\right\rangle_{y^k}  +g(u) -g(y_k), \] 
    $$d_{k+1}= R_{y_k}^{-1}(p_{k+1}).$$
    
     \textbf{Step 3. Stopping criteria}
     If $\theta(x^k) = 0$, then stop.
    
     \textbf{Step 4. Compute the iterate}
     Set
     \[ x^{k+1} = R_{x^k} ( \lambda_k R^{-1}_{x^k}(p_{k+1})). \] 
    
     \textbf{Step 5. Starting a new iteration}
     Set $k = k + 1$ and return to Step 1.
\end{algorithm}

Analyzing the convergence and convergence rate for the above accelerated method is challenging, primarily due to the inherent nonconvexity of the problem and
the complexities introduced by transporting tangent vectors.
A comprehensive analysis of the accelerated method is beyond the scope of this paper.
However, the performance of the accelerated method compared to the non-accelerated method is shown in the numerical experiments in Section~\ref{sec: expreriment}.

\section{Numerical experiments}
\label{sec: expreriment}

Sparse principal component analysis (Sparse PCA) plays a crucial role in high-dimensional data analysis by enhancing the interpretability and efficiency of dimensionality reduction. Unlike traditional PCA, which often produces complex results in high-dimensional settings, Sparse PCA introduces sparsity to refine data representation and improve clarity. This is particularly relevant in fields such as bioinformatics, image processing, and signal analysis, where interpretability and reconstruction accuracy are essential. In this section, we consider Sparce PCA problems to check the validity of our proposed methods. 
We explore two models for sparse PCA as in~\cite{huang_riemannian_2022}; see that reference for further analysis. Here, we set $\mathcal{X} = \mathcal{M}$ in each problem, and Assumptions~\ref{assu Lsmooth}, \ref{assum lowerbounded} and~\ref{assu:Lg} hold.

\subsection{Sparse PCA on the sphere manifold}
Now consider problems in sphere manifolds $\mathbb{S}^{n-1}=\{x \in \mathbb{R}^n:\|x\|=1\}.$ 
The specific optimization problem is formulated as follows:
\begin{equation}\label{exam: sphere}
    \min _{x \in \mathbb{S}^{n-1}}  - x^T A^T A x + \lambda \|x\|_1, 
\end{equation}
where $\norm{\,\cdot\,}_1$ denotes the $L_1$-norm, and $\lambda > 0$ is a parameter.
Here, we use the exponential mapping as the retraction, i.e.,
$$
\operatorname{Exp}_x\left(\eta_x\right)=x \cos \left(\left\|\eta_x\right\|\right)+\eta_x \sin \left(\left\|\eta_x\right\|\right) /\left\|\eta_x\right\|, \quad x \in \mathbb{S}^{n-1}, \quad \eta_x \in \mathrm{T}_x \mathbb{S}^{n-1}.
$$
The inverse exponential mapping can be defined as follows, as presented in~\cite{huang_riemannian_2022}:
$$
\log _x(y)=\operatorname{Exp}_x^{-1}(y)=\frac{\cos ^{-1}\left(x^T y\right)}{\sqrt{1-\left(x^T y\right)^2}}\left(I_n-x x^T\right) y, \quad x, y \in \mathbb{S}^{n-1}.
$$

The subproblem \eqref{eq pk} applied to \eqref{exam: sphere}, can be written as
\begin{equation}
    \min _{y \in \mathbb{S}^{n-1}} u(y), \quad \text {where } u(y)=\frac{1}{\lambda}\langle\grad f(x), \log _x(y)\rangle +\|y\|_1 .
\end{equation}
Let us not check how to compute its solution. 
Let \( h(y) = \frac{1}{\lambda} \langle \operatorname{grad} f(x), \log_x(y) \rangle \), and let \( y^k \) denote the current estimate of the minimizer of \( u(y) \). Then solving the following problem to obtain the next estimate \( y^{k+1} \):
$$
\min _{y \in \mathbb{S}^{n-1}} h(y^k)+\nabla h(y^k)^T (y-y^k)+\|y\|_1,
$$
which is equal to 
$$
\min _{y \in \mathbb{S}^{n-1}} \nabla h(y^k)^T y+\|y\|_1.$$

Actually, \( h(y) \) is approximated by its first-order Taylor expansion around \( y^k \). Note that since \( \|y\| = 1 \) for all \( y \in \mathbb{S}^{n-1} \), the above problem is further equivalent to:
\begin{equation}\label{spheresubproblen}
    \min _{y \in \mathbb{S}^{n-1}} \frac{1}{2}\left\|y+\nabla h(y^k)\right\|^2+\|y\|_1.
\end{equation}
The above problem can be resolved directly using \cite[Lemma D.1]{huang_riemannian_2022}.

\subsection{Sparse PCA on the Stiefel manifold}
Now consider sparse PCA on the Stiefel manifold. The specific optimization problem is formulated as follows:

\begin{equation}\label{ex:st}
    \min _{X \in \operatorname{st}(p, n)}-\operatorname{trace}\left(X^T A^T A X\right)+ \lambda \|X\|_1.
\end{equation}
where $ \operatorname{st}(p, n)=\left\{X \in \mathbb{R}^{n \times p} \mid X^T X=I_p\right\}$ denotes the Stiefel manifold, and $\lambda > 0$.
Moreover, the term $-\operatorname{trace}\left (X^T A^T A X\right)$ aims to maximize the variance of the data, \(\|X\|_1\) characterizes the sparsity of the principal components, and \( A \) is the original data matrix.
The tangent space  is defined as
$
\mathrm{T}_X \operatorname{st}(p, n)=\left\{\eta \in \mathbb{R}^{n \times p} \mid X^T \eta+\eta^T X=0\right\} .
$

Here we use the Riemannian metric defined as
$
\left\langle\eta_X, \xi_X\right\rangle_X=\operatorname{trace}(\eta_X^T \xi_X).
$
And using the polar decomposition as the retraction, i.e.,
$$
R_X\left(\eta_X\right)=\left(X+\eta_X\right)\left(I_p+\eta_X^T \eta_X\right)^{-1 / 2}.
$$
The vector transport is defined by
$$
\mathcal{T}_{\eta_X} \xi_X=Y \Omega+\left(I_n-Y Y^T\right) \xi_X\left(Y^T\left(X+\eta_X\right)\right)^{-1},
$$
where $Y=R_X\left(\eta_X\right)$ and $\Omega$ is the solution of the Sylvester equation 
$\left(Y^T\left(X+\eta_X\right)\right) \Omega+\Omega\left(Y^T(X+\eta_X)\right)=Y^T \xi_X-\xi_X^T Y
$, as given in \cite[Lemma 10.2.1]{huang2013optimization}.

For ease of notation, we define the operator $\mathcal{A}_k$ by $\mathcal{A}_k(V):=V^T X_k+X_k^T V$ and rewrite the subproblem \eqref{eq subsub} as
$$
\begin{aligned}
V_k:=\underset{V}{\operatorname{argmin}} & \left\langle \mathcal{T}_{R_{d^k}}^{-\sharp}\grad f\left(X_k\right), V\right\rangle+g\left(X_k+V\right) \\
\text { s.t. } & \mathcal{A}_k(V)=0 .
\end{aligned}
$$
This can be solved by the subgradient method or the proximal gradient method.
For the subgradient method, note that the subgradient of \( g(X) \) can be calculate through function $m(X)$, which is defined as:
\begin{itemize}[-]
\item \( m_{ij}(X) = \lambda \) if \( (X)_{ij} > 0 \),
\item \( m_{ij}(X) = -\lambda \) if \( (X)_{ij} < 0 \),
\item \( m_{ij}(X) \in [-\lambda, \lambda] \) if \( (X)_{ij} = 0 \),
\end{itemize}
where $m_{ij}(X)$ is the element of $m(X)$.
Then the subgradient of the above problem can be written as
$
\mathcal{T}_{R_{d^k}}^{-\sharp} \grad f(X_k) + \lambda \cdot \operatorname{sign}(X_k + V_k).
$

\subsection{Numerical results}
We implemented the algorithms in Python 3.9.12, and the experiments were carried out on a MacBook Pro with an Apple M1 Pro chip and 16GB of RAM.
\subsubsection{Sphere experiment}
 For the example \eqref{exam: sphere}, we conducted experiments on a $n$-dimensional problem, using a randomly generated \( n \times n \) standardized \( A \) and a regularization parameter \( \lambda = 0.1 \). The experiment evaluates Armijo step size with parameters \( \zeta = 0.1 \), \( \omega_1 = 0.05 \), and \( \omega_2 = 0.95 \). Each experiment runs for the same original problem 10 times with different initial points and stops if the norm of \(\theta_{x^k}\) is less than \(10^{-4}\) or if the difference between the latest function value and the function value from five iterations ago is less than \(10^{-4}\). 
 In this example, our experiments were conducted on high-dimensional spheres, with dimensions set to 10, 100, and 1000, across ten runs for each method. In the numerical experiments, we observed that solving the subproblems typically converges after just one iteration, suggesting that the local model provides an effective approximation of the original subproblem.

As shown in Table \ref{table 1} and Figure~\ref{fig:example_sphere}, the results demonstrated that the Armijo step-size strategy consistently outperformed the others in terms of computational time and required iterations to achieve function value convergence. 
Overall, our findings suggest that the Armijo step-size strategy offers significant advantages in efficiency and speed, particularly in high-dimensional settings. 

We observe the performance of different step-size strategies (Armijo, adaptive, and diminishing) under varying values of \( \lambda \). Across all values of \( \lambda \) in Table~\ref{table:lambda_results}, the Armijo step size consistently outperforms the other methods in terms of both time and iterations. Specifically, at \( \lambda = 0.1 \), the Armijo method achieves the fastest convergence, taking an average of 0.3505 seconds and 53.7 iterations. As \( \lambda \) increases, the performance gap between Armijo and the other methods becomes more pronounced. For \( \lambda = 0.5 \), Armijo converges in 0.3153 seconds and 44.8 iterations, while the diminishing strategy requires 2.0290 seconds and 530.2 iterations, demonstrating its inefficiency for larger \( \lambda \) values. Similarly, the adaptive step size shows intermediate performance, but its time and iteration counts remain higher than Armijo. Overall, the results suggest that the Armijo step size is the most robust and efficient strategy for different values of \( \lambda \), while the diminishing method struggles with larger problem sizes and regularization terms.

\subsubsection{Comparison with the accelerated method}

We consider the same problem and compare the performance of the accelerated algorithm with the non-accelerated version. To better observe the convergence rate and differences between various step sizes, we run 50 iterations. As shown in Figure~\ref {fig:example_sphere_compare}, the results indicate that the accelerated algorithm with Armijo and diminishing step sizes generally outperforms the non-accelerated algorithm, particularly when compared with adaptive and diminishing step sizes. The results are similar to the method in \cite{li2021momentum}, and it can indeed accelerate the decline at the beginning.

Table \ref{table:sphere_compare} summarizes the results of 10 random runs for the Sphere Compare Experiment with \( \lambda = 0.1 \). Across all dimensions (\( n = 10, 100, 500, 1000 \)), the RGCG method combined with Armijo step size consistently achieves the best performance, with the lowest computation time and iteration count. For instance, with \( n = 1000 \), RGCG + Armijo takes 8.47 seconds and 346.8 iterations, outperforming the adaptive and diminishing strategies, which require more time and iterations.

The accelerated method shows similar trends, with Armijo step size yielding better results than adaptive and diminishing strategies. However, for higher dimensions, RGCG with Armijo step size remains faster overall. The diminishing step size, in both methods, leads to significantly longer runtimes and higher iteration counts, especially as \( n \) increases. 
For  $n = 1000, 2000$, indicating that the accelerated method benefits more from the diminishing step size in higher dimensions.
In summary, the Armijo step size proves to be the most efficient strategy, particularly when paired with the RGCG method, making it the preferred choice for both low and high-dimensional problems.

\begin{figure}[htbp]
    \centering
    \subfigure[Function value, $n=10$]{
        \label{Fig1.sub.1}
        \includegraphics[width=0.3\textwidth]{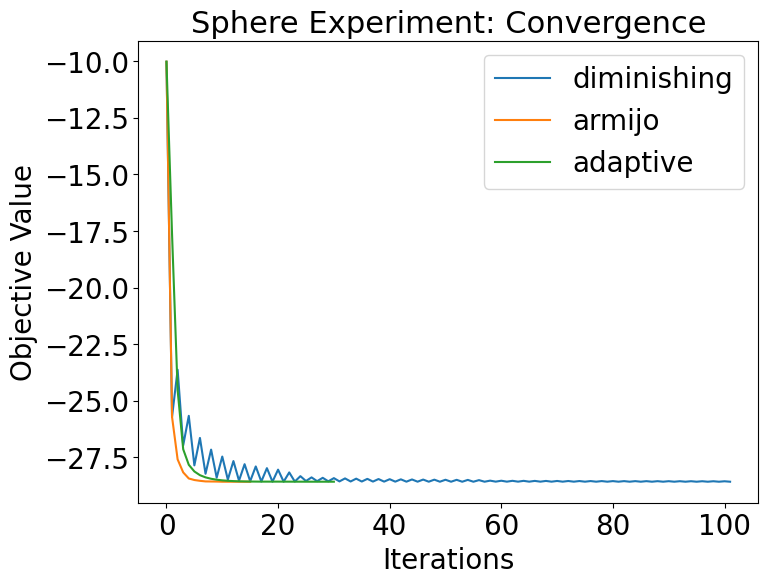}}
    \subfigure[Function value, $n=100$]{
        \label{Fig2.sub.1}
        \includegraphics[width=0.3\textwidth]{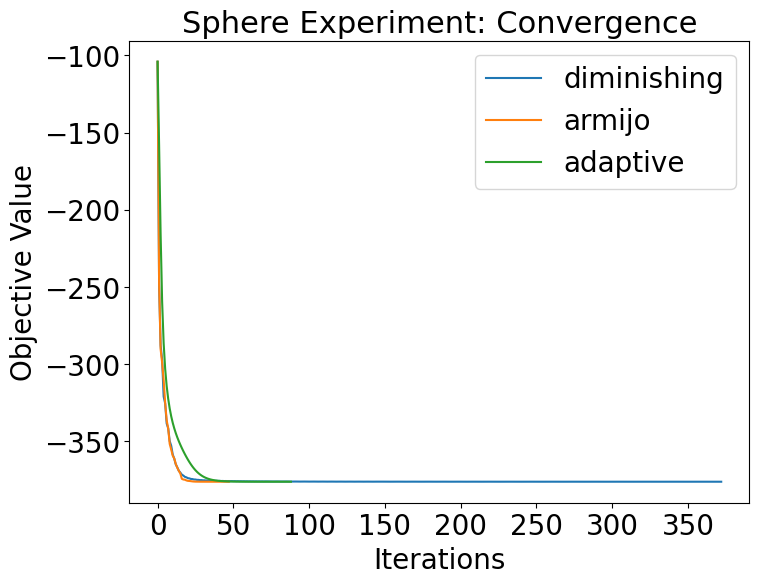}}
    \subfigure[Function value, $n=1000$]{
        \label{Fig3.sub.1}
        \includegraphics[width=0.3\textwidth]{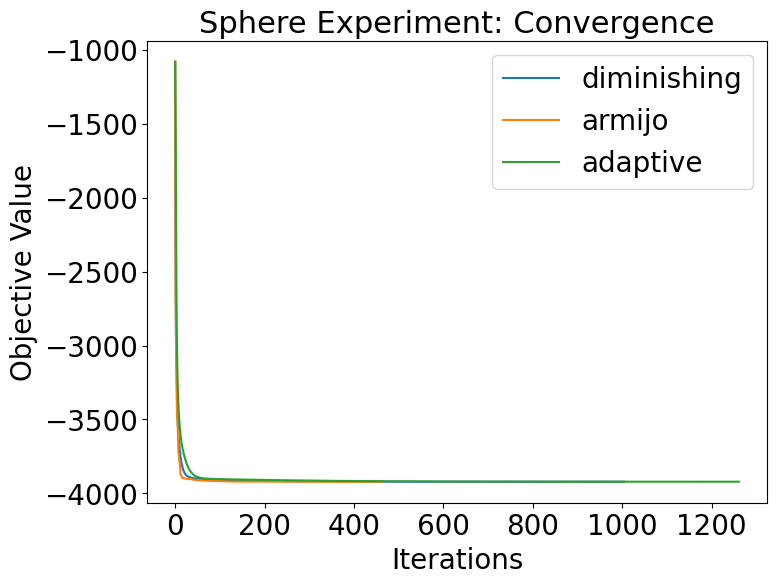}}
    
    \subfigure[Theta value, $n=10$]{
        \label{Fig1.sub.2}
        \includegraphics[width=0.3\textwidth]{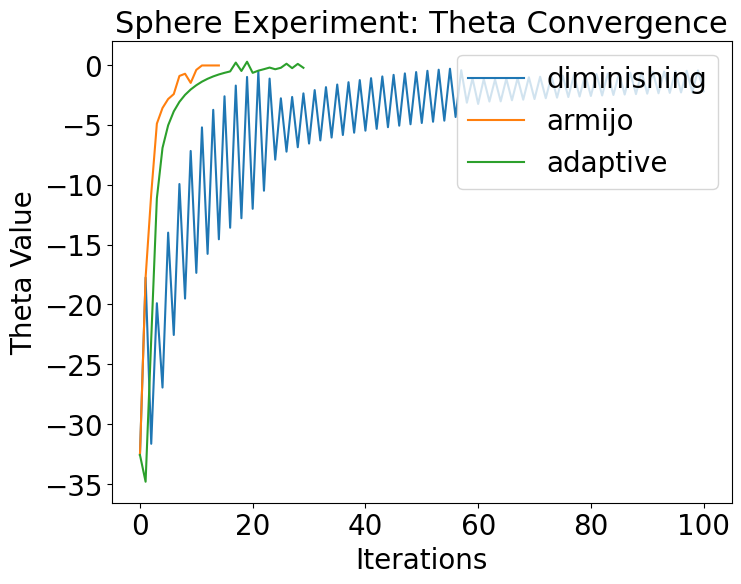}}
    \subfigure[Theta value, $n=100$]{
        \label{Fig2.sub.2}
        \includegraphics[width=0.3\textwidth]{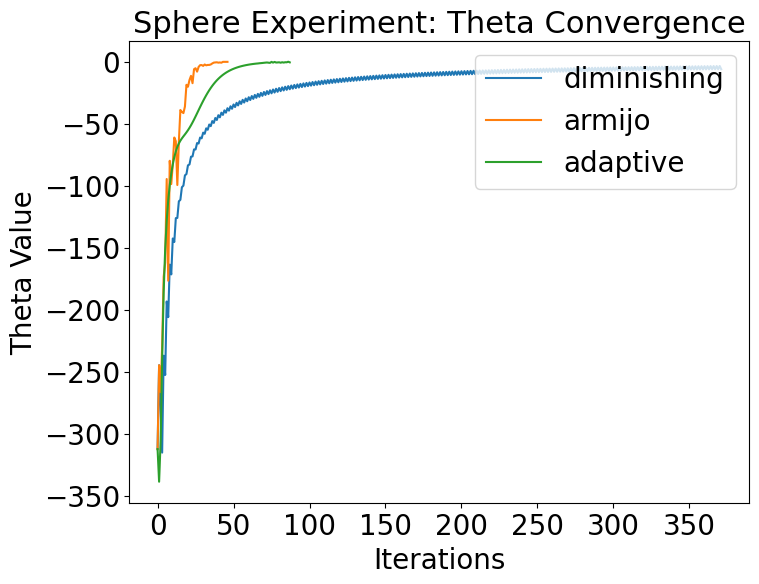}}
    \subfigure[Theta value, $n=1000$]{
        \label{Fig3.sub.2}
        \includegraphics[width=0.3\textwidth]{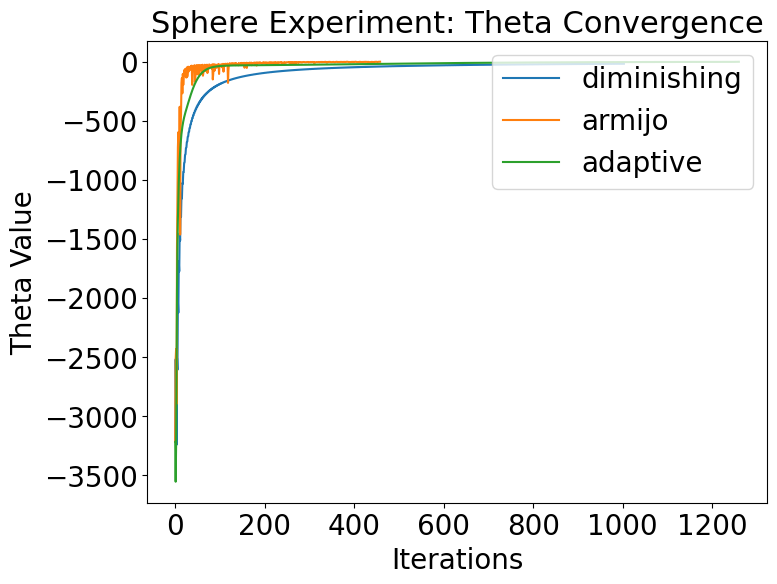}}
    
    \subfigure[CPU time, $n=10$]{
        \label{Fig1.sub.2}
        \includegraphics[width=0.3\textwidth]{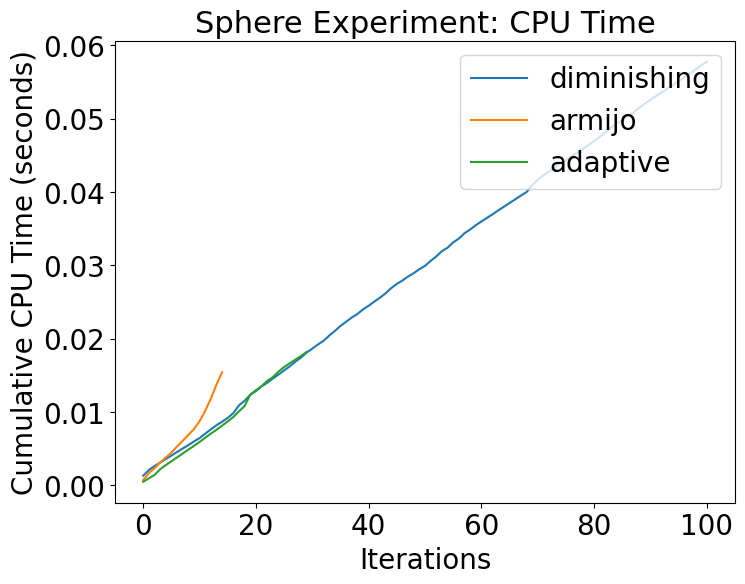}}
    \subfigure[CPU time, $n=100$]{
        \label{Fig2.sub.2}
        \includegraphics[width=0.3\textwidth]{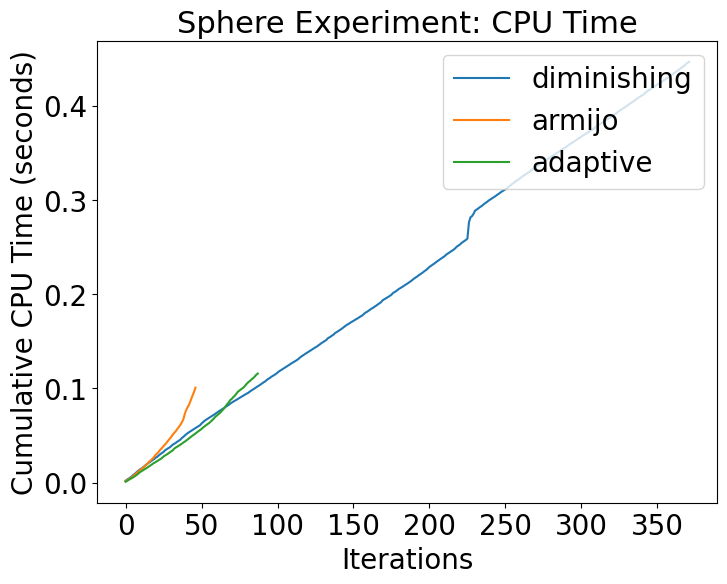}}
    \subfigure[CPU time, $n=1000$]{
        \label{Fig3.sub.2}
        \includegraphics[width=0.3\textwidth]{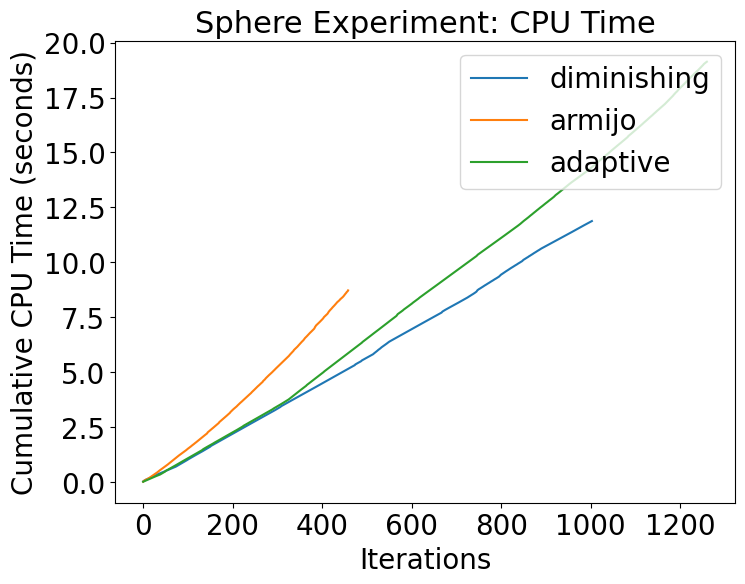}}
    
    \caption{Example \eqref{exam: sphere} with fixed $A$, different $n$ and \( \lambda = 0.1 \)}
    \label{fig:example_sphere}
\end{figure}

\begin{table}[h]
    \centering
    \caption{Average results of 10 random runs for problem \eqref{exam: sphere}  with \( \lambda = 0.1 \) and fixed $A$}
    \begin{tabular}{l p{2.5cm} p{2.5cm} p{2.5cm}}
        \toprule
          n & Step size & Time (seconds) & Iterations \\
        \midrule
        \multirow{3}{*}{10} 
          & Armijo & \textbf{0.0152} & \textbf{14.50} \\
          & Adaptive & 0.0208 & 31.30 \\
          & Diminishing & 0.0491 & 88.80 \\
        \midrule
        \multirow{3}{*}{100} 
          & Armijo & \textbf{0.0771} & \textbf{32.20} \\
          & Adaptive & 0.1761 & 78.70 \\
          & Diminishing & 0.3113 & 244.60 \\
        \midrule
        \multirow{3}{*}{1000} 
          & Armijo & \textbf{8.3842} & \textbf{344.60} \\
          & Adaptive & 17.4690 & 921.00 \\
          & Diminishing & 13.1272 & 929.70 \\
        \bottomrule
    \end{tabular}    
    \label{table 1}
\end{table}
\begin{table}[htbp]
    \centering
    \caption{Average results of 10 random runs for problem \eqref{exam: sphere} with varying \( \lambda \) and fixed $A$, \( n = 500 \)}
    \begin{tabular}{l p{2.5cm} p{2.5cm} p{2.5cm}}
        \toprule
        \( \lambda \) & Step size & Time (seconds) & Iterations \\
        \midrule
        \multirow{3}{*}{0.1} 
          & Armijo & \textbf{0.3505} & \textbf{53.70} \\
          & Adaptive & 0.7006 & 136.10 \\
          & Diminishing & 2.2043 & 522.90 \\
        \midrule
        \multirow{3}{*}{0.3} 
          & Armijo & \textbf{0.3987} & \textbf{51.60} \\
          & Adaptive & 0.6094 & 111.70 \\
          & Diminishing & 1.6226 & 381.80 \\
        \midrule
        \multirow{3}{*}{0.5} 
          & Armijo & \textbf{0.3153} & \textbf{44.80} \\
          & Adaptive & 0.5716 & 111.30 \\
          & Diminishing & 2.0290 & 530.20 \\
        \midrule
        \multirow{3}{*}{1.0} 
          & Armijo & \textbf{0.3016} & \textbf{46.50} \\
          & Adaptive & 0.5137 & 97.30 \\
          & Diminishing & 1.8412 & 465.50 \\
        \bottomrule
    \end{tabular}    
    \label{table:lambda_results}
\end{table}

\begin{figure}[htbp]
    \centering
    \subfigure[Function value, $n=10$]{
        \includegraphics[width=0.3\textwidth]{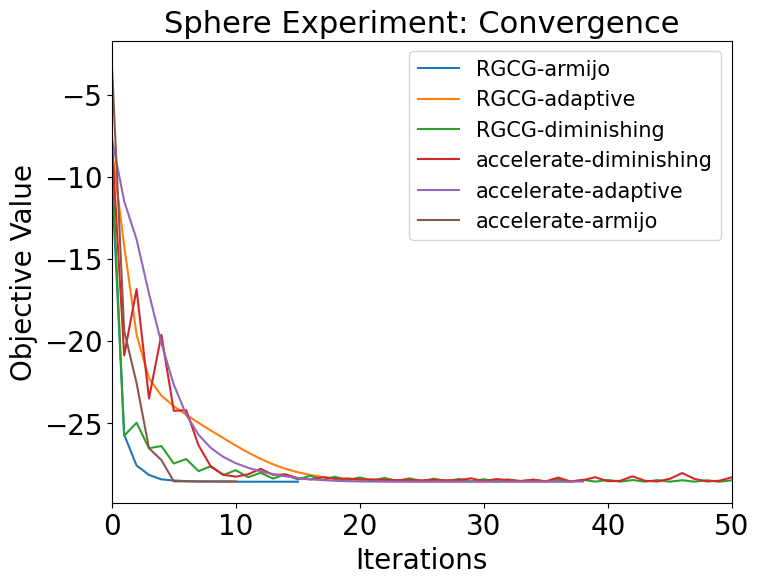}}
    \subfigure[Function value, $n=100$]{
        \includegraphics[width=0.3\textwidth]{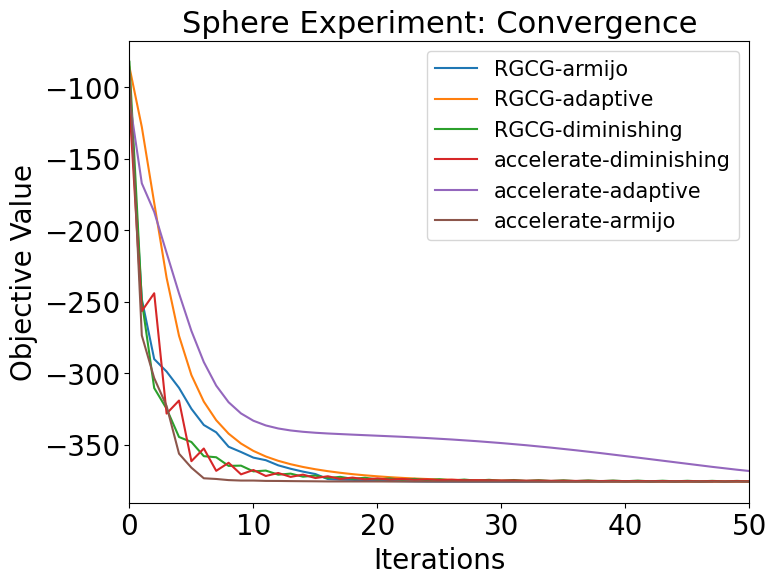}}
    \subfigure[Function value, $n=1000$]{
        \includegraphics[width=0.3\textwidth]{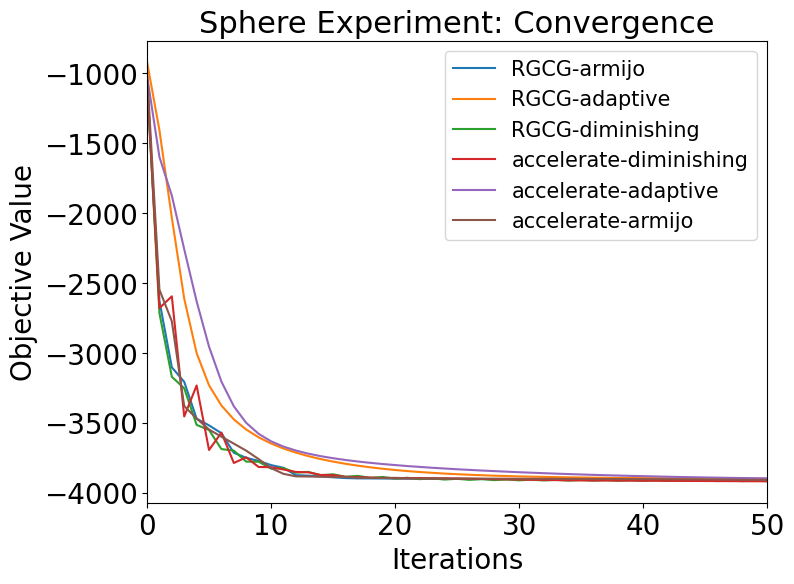}}
    
    \subfigure[Theta value, $n=10$]{
        \includegraphics[width=0.3\textwidth]{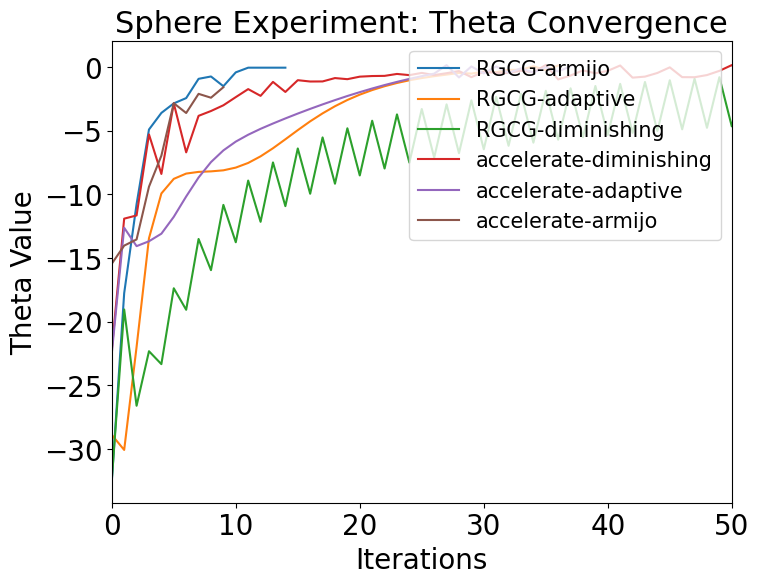}}
    \subfigure[Theta value, $n=100$]{
        \includegraphics[width=0.3\textwidth]{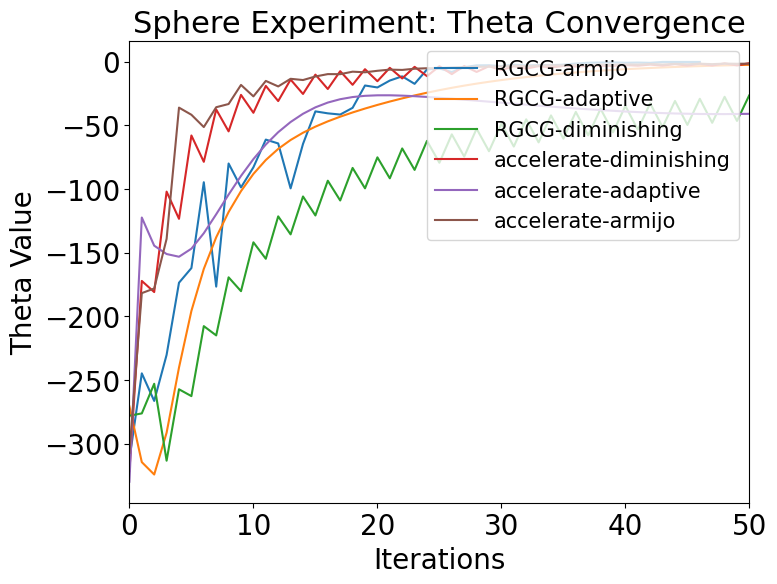}}
    \subfigure[Theta value, $n=1000$]{
        \includegraphics[width=0.3\textwidth]{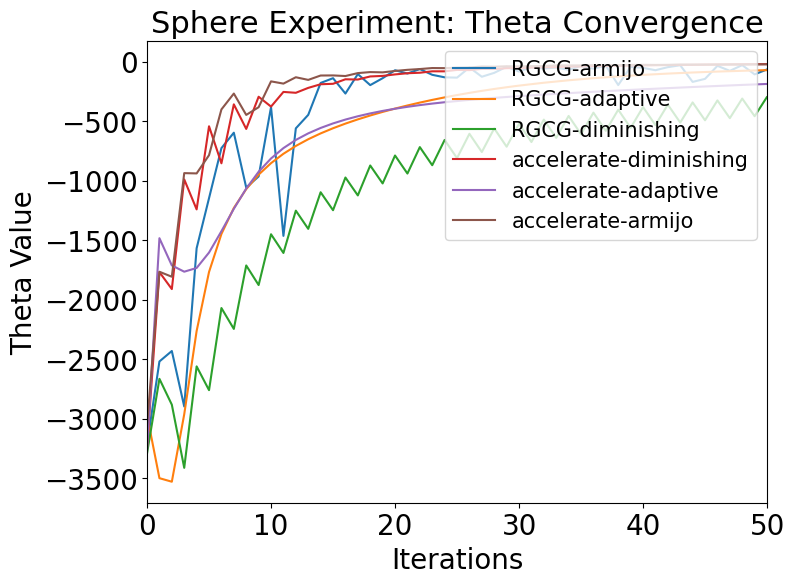}}
    
    \caption{Example \eqref{exam: sphere} with fixed $A$, \( \lambda = 0.1 \) and different $n$ for RGCG and accelerated method }
    \label{fig:example_sphere_compare}
\end{figure}

\begin{table}[htbp]
    \centering
    \caption{Comparison of Accelerated and RGCG Methods: Average Results of 10 Random Runs for problem \eqref{exam: sphere} with \( \lambda = 0.1 \)}
    \begin{tabular}{l p{4.5cm} p{2.5cm} p{2.5cm}}
        \toprule
          n & Method + Step size & Time (seconds) & Iterations \\
        \midrule
        \multirow{3}{*}{10} 
          & RGCG + Armijo & \textbf{0.0173} & \textbf{15.30} \\
          & RGCG + Adaptive & 0.0288 & 36.10 \\
          & RGCG + Diminishing & 0.0796 & 109.50 \\
        \midrule
        \multirow{3}{*}{10} 
          & Accelerated + Armijo & \textbf{0.0496} & \textbf{36.80} \\
          & Accelerated + Adaptive & 0.0507 & 61.30 \\
          & Accelerated + Diminishing & 0.1137 & 107.30 \\
        \midrule
        \multirow{3}{*}{100} 
          & RGCG + Armijo & \textbf{0.1069} & \textbf{32.40} \\
          & RGCG + Adaptive & 0.1670 & 83.00 \\
          & RGCG + Diminishing & 0.6486 & 295.50 \\
        \midrule
        \multirow{3}{*}{100} 
          & Accelerated + Armijo & \textbf{0.2629} & \textbf{85.30} \\
          & Accelerated + Adaptive & 0.3232 & 169.30 \\
          & Accelerated + Diminishing & 0.9453 & 368.50 \\
        \midrule
        \multirow{3}{*}{500} 
          & RGCG + Armijo & \textbf{0.3750} & \textbf{56.70} \\
          & RGCG + Adaptive & 0.8563 & 160.50 \\
          & RGCG + Diminishing & 2.4350 & 605.30 \\
        \midrule
        \multirow{3}{*}{500} 
          & Accelerated + Armijo & \textbf{1.3190} & \textbf{166.00} \\
          & Accelerated + Adaptive & 2.4168 & 385.60 \\
          & Accelerated + Diminishing & 2.8703 & 417.10 \\
        \midrule
        \multirow{3}{*}{1000} 
          & RGCG + Armijo & \textbf{8.4704} & \textbf{346.80} \\
          & RGCG + Adaptive & 17.8079 & 830.50 \\
          & RGCG + Diminishing & 14.0213 & 908.70 \\
        \midrule
        \multirow{3}{*}{1000} 
          & Accelerated + Armijo & \textbf{8.6065} & \textbf{255.30} \\
          & Accelerated + Adaptive & 13.2967 & 687.70 \\
          & Accelerated + Diminishing & 12.1202 & 445.60 \\
        \midrule
        \multirow{3}{*}{2000} 
          & RGCG + Armijo & \textbf{14.6191} & \textbf{160.20} \\
          & RGCG + Adaptive & 21.9078 & 411.40 \\
          & RGCG + Diminishing & 50.3453 & 1106.70 \\
        \midrule
        \multirow{3}{*}{2000} 
          & Accelerated + Armijo & \textbf{19.0932} & \textbf{188.00} \\
          & Accelerated + Adaptive & 47.8875 & 752.60 \\
          & Accelerated + Diminishing & 25.3371 & 396.20 \\
        \midrule
    \end{tabular}
    \label{table:sphere_compare}
\end{table}

\subsubsection{Stiefel experiment}
 For the example \eqref{ex:st}, we conduct experiments on a $(n,p)$-dimensional problem, using a randomly generated \( n \times n \) standardized symmetric \( A \) and a regularization parameter \( \lambda = 0.1 \). As in the previous experiment, here we evaluate Armijo step size with parameters \( \zeta = 0.1 \), \( \omega_1 = 0.05 \), and \( \omega_2 = 0.95 \). Each experiment runs for the same original problem 10 times with different initial points, and stops if the norm of \(\theta\) or the difference of the function value is less than \(10^{-4}\). The maximum number of iterations allowed for solving subproblems is set to 2.
 The performance of the RGCG method is analyzed by plotting the convergence behavior of the objective values and the value of \( \theta \) over iterations and the table of average performance.

The experimental results for \eqref{ex:st} on the Stiefel manifold (Table~\ref{experi_stiefel} and Figures~\ref{stfig:example_st}) show clear differences in performance between the three step-size strategies across various problem sizes. For smaller problem dimensions (e.g., \( n = 100 \) and \( n = 200 \)), the Armijo step size consistently achieves the fewest iterations, indicating its efficiency in reaching convergence faster. However, the diminishing step size, while stable, requires significantly more iterations and time, especially as the problem dimension increases.

For higher dimensions (\( n = 500 \) and \( n = 1000 \)), the adaptive step size shows a competitive balance between time and iterations, especially compared to the diminishing one, which becomes increasingly less efficient. However, Armijo continues to outperform both in terms of iterations, making it the preferred choice for high-dimensional problems where computational efficiency is critical.
Overall, the Armijo step size demonstrates superior performance in both time and iteration count, particularly for larger problem sizes. 
The diminishing step size, while robust, is less efficient for larger problems, indicating that it may not be suitable for high-dimensional optimization tasks.

We see a similar trend with the Armijo step size consistently performing well across varying values of \( \lambda \). At \( \lambda = 0.1 \), Armijo converges in 0.6839 seconds and 103.3 iterations, slightly slower than the adaptive method in terms of time (0.5516 seconds) but significantly faster than the diminishing strategy (1.3375 seconds and 409.9 iterations). For different \( \lambda \), Armijo maintains its competitive performance. For \( \lambda = 1.0 \), it achieves an average of 0.9943 seconds and 147.8 iterations, compared to 1.7331 seconds and 514.4 iterations for the diminishing step size. While the adaptive strategy generally outperforms the diminishing one, it remains less efficient than Armijo in terms of iterations. 

\begin{table}[htbp]
    \centering
    \caption{Average results of 10 random runs for problem \eqref{ex:st} with \( \lambda = 0.1 \) and fixed $A$}
    \begin{tabular}{l p{2.5cm} p{2.5cm} p{2.5cm}}
        \toprule
        (p, n) & Step size & Time (seconds) & Iterations \\
        \midrule
        \multirow{3}{*}{(10, 100)} 
          & Armijo & 0.2344 & \textbf{104.7} \\
          & Adaptive & 0.2287 & 233.0 \\
          & Diminishing & \textbf{0.1775} & 192.4 \\
        \midrule
        \multirow{3}{*}{(10, 200)} 
          & Armijo & \textbf{0.2729} & \textbf{59.2} \\
          & Adaptive & 0.4160 & 242.8 \\
          & Diminishing & 0.5557 & 347.0 \\
        \midrule
        \multirow{3}{*}{(10, 500)} 
          & Armijo & 1.8151 & \textbf{115.8} \\
          & Adaptive & \textbf{1.5056} & 229.5 \\
          & Diminishing & 5.5653 & 827.5 \\
        \midrule
        \multirow{3}{*}{(10, 1000)} 
          & Armijo & \textbf{11.9895} & \textbf{219.0} \\
          & Adaptive & 12.6799 & 459.3 \\
          & Diminishing & 44.2100 & 1630.5 \\
        \bottomrule
    \end{tabular}
    \label{experi_stiefel}
\end{table}
\begin{table}[htbp]
    \centering
    \caption{Average results of 10 random runs for problem \eqref{ex:st} with varying \( \lambda \) and fixed $A$, \( n = 200, p = 10 \)}
    \begin{tabular}{l p{2.5cm} p{2.5cm} p{2.5cm}}
        \toprule
        \( \lambda \) & Step size & Time (seconds) & Iterations \\
        \midrule
        \multirow{3}{*}{0.1} 
          & Armijo & 0.6839 & \textbf{103.3} \\
          & Adaptive & \textbf{0.5516} & 171.9 \\
          & Diminishing & 1.3375 & 409.9 \\
        \midrule
        \multirow{3}{*}{0.3} 
          & Armijo & 1.0916 & 166.6 \\
          & Adaptive & \textbf{0.6786} & 206.7 \\
          & Diminishing & 1.4409 & 442.2 \\
        \midrule
        \multirow{3}{*}{0.5} 
          & Armijo & 0.9410 & \textbf{136.0} \\
          & Adaptive & \textbf{0.5557} & 167.9 \\
          & Diminishing & 1.3503 & 402.8 \\
        \midrule
        \multirow{3}{*}{1.0} 
          & Armijo & 0.9943 & \textbf{147.8} \\
          & Adaptive & \textbf{0.5600} & 170.1 \\
          & Diminishing & 1.7331 & 514.4 \\
        \bottomrule
    \end{tabular}
    \label{experi_stiefel_lambda}
\end{table}

\begin{figure}[htbp]
    \centering
    \subfigure[Function value, $n=100, p=10$]{
        \label{stFig3.sub.1}
        \includegraphics[width=0.3\textwidth]{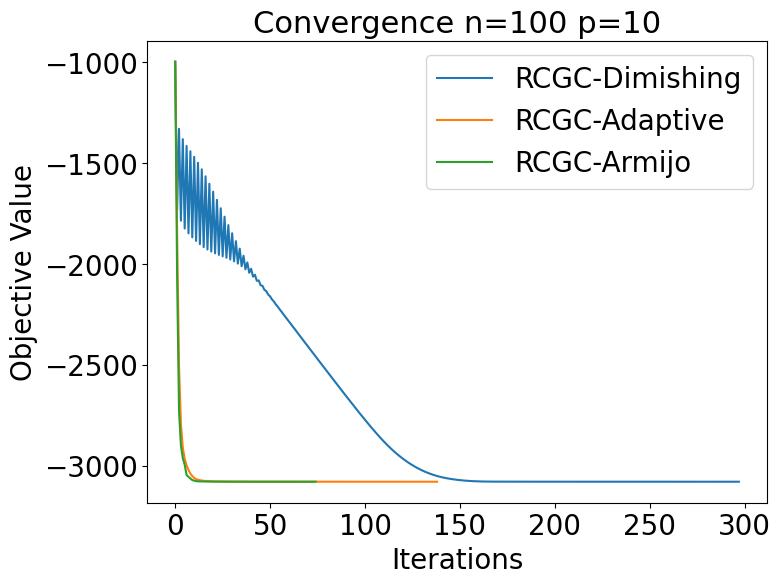}}
    \subfigure[Function value, $n=500, p=10$]{
        \label{stFig1.sub.1}
        \includegraphics[width=0.3\textwidth]{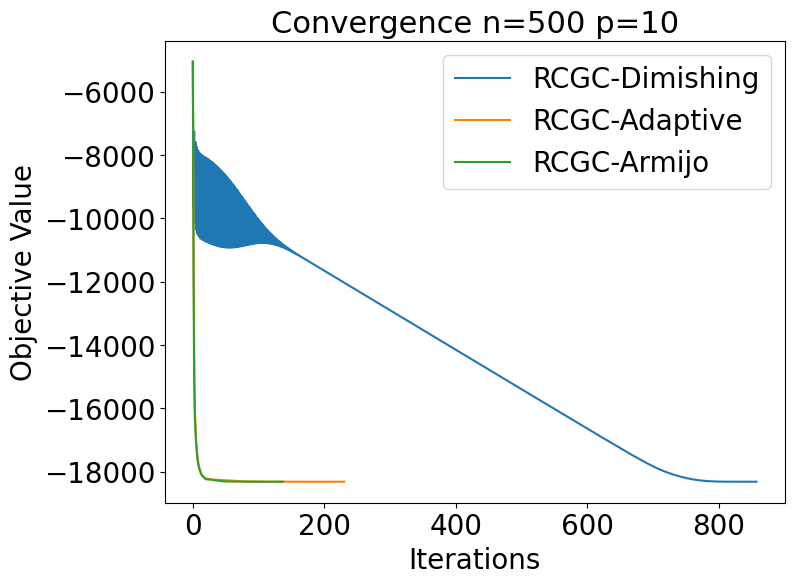}}
    \subfigure[Function value, $n=1000, p=10$]{
        \label{stFig2.sub.1}
        \includegraphics[width=0.3\textwidth]{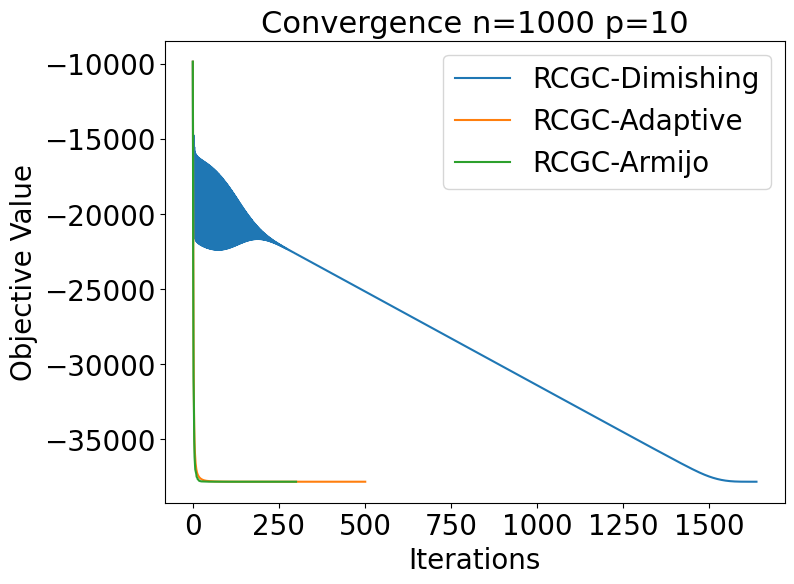}}
    
    \subfigure[Theta value, $n=100, p=10$]{
        \label{stFig3.sub.2}
        \includegraphics[width=0.3\textwidth]{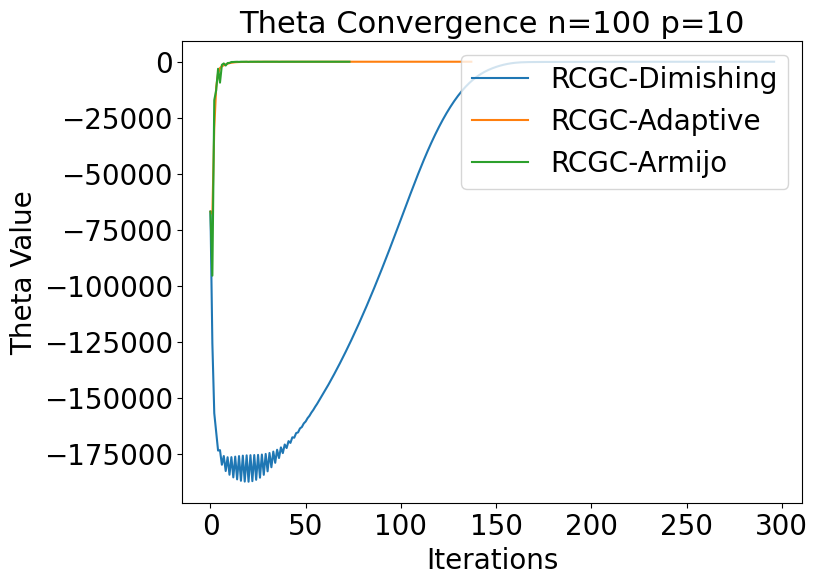}}
    \subfigure[Theta value, $n=500, p=10$]{
        \label{stFig1.sub.2}
        \includegraphics[width=0.3\textwidth]{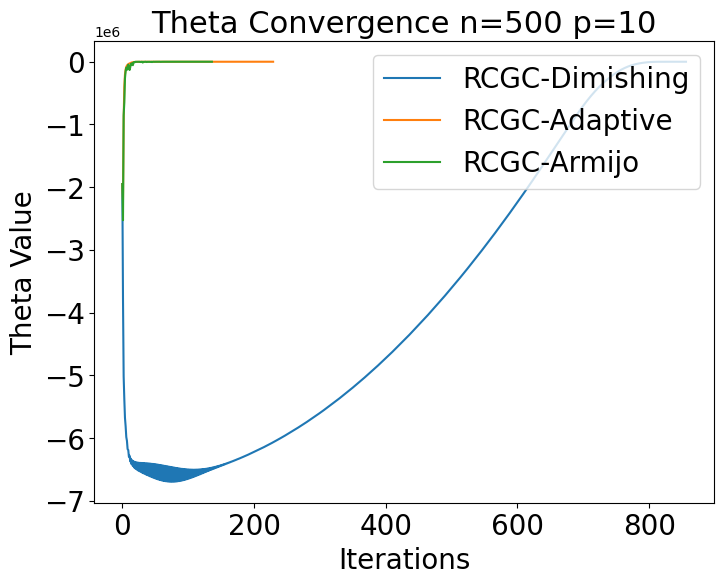}}
    \subfigure[Theta value, $n=1000, p=10$]{
        \label{stFig2.sub.2}
        \includegraphics[width=0.3\textwidth]{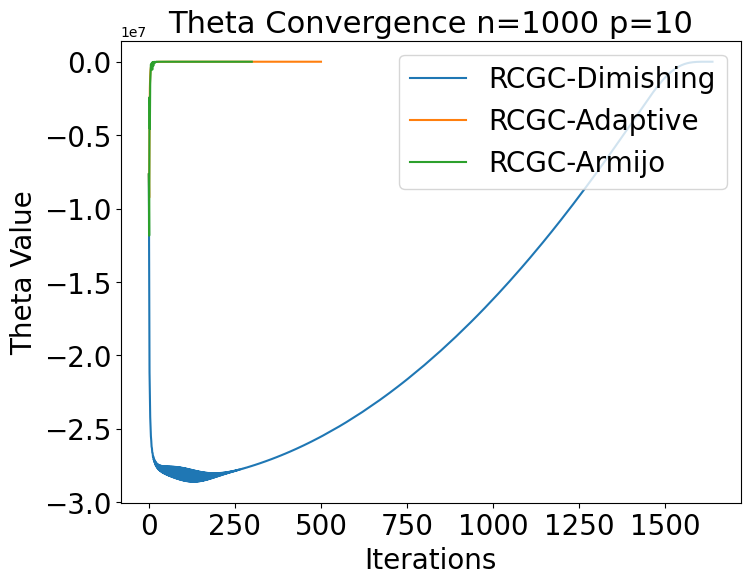}}

    \subfigure[CPU time, $n=100, p=10$]{
        \label{stFig3.sub.2}
        \includegraphics[width=0.3\textwidth]{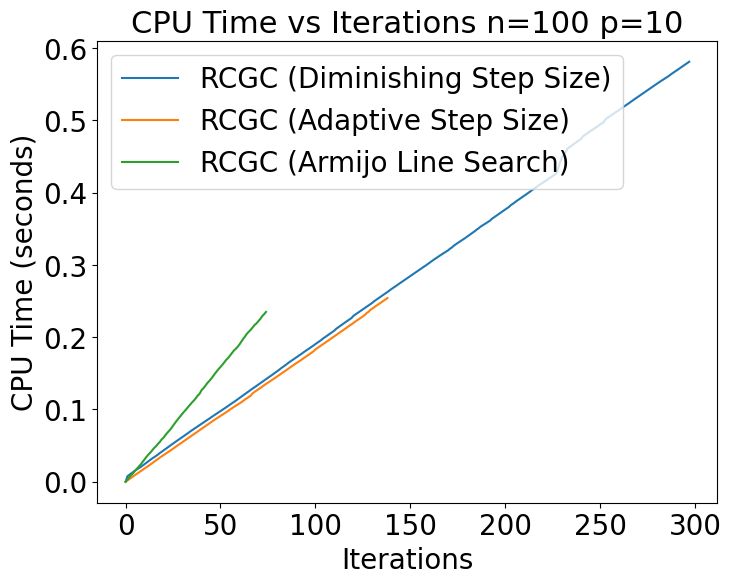}}
    \subfigure[CPU time, $n=500, p=10$]{
        \label{stFig1.sub.2}
        \includegraphics[width=0.3\textwidth]{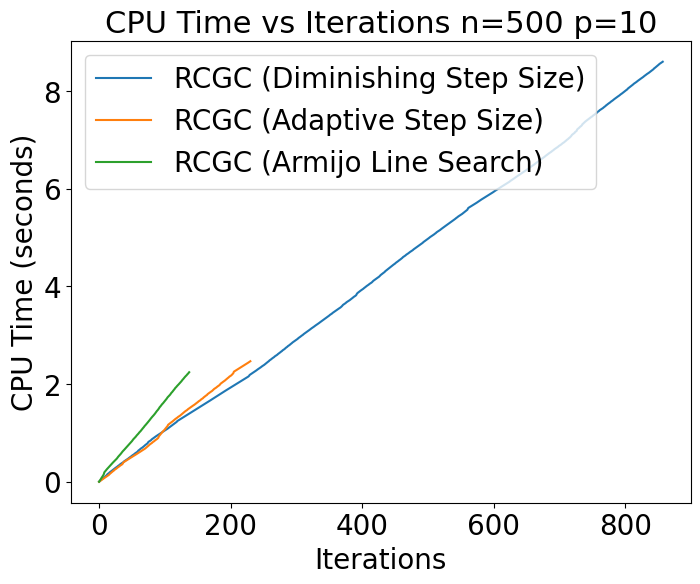}}
    \subfigure[CPU time, $n=1000, p=10$]{
        \label{stFig2.sub.2}
        \includegraphics[width=0.3\textwidth]{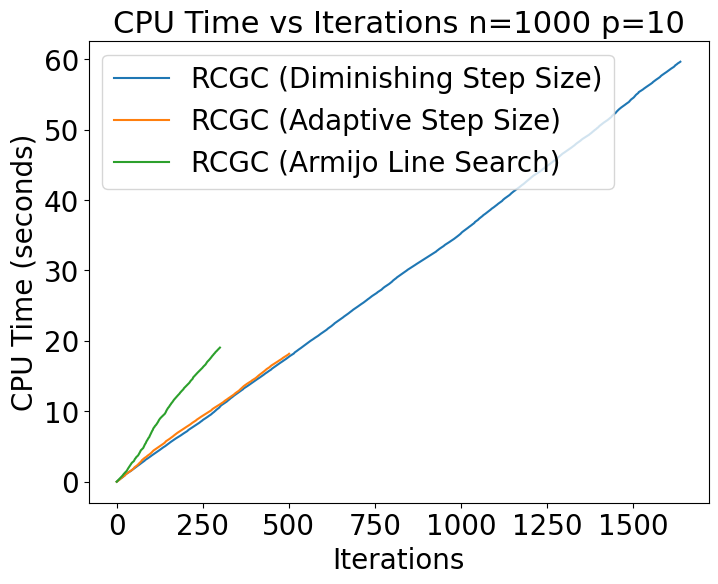}}

    \caption{Example \eqref{ex:st} with fixed $A$, \( \lambda = 0.1 \), different $n$ and $p$}
    \label{stfig:example_st}
\end{figure}

The overall results demonstrate that both the adaptive and Armijo step size methods outperform the diminishing step size in terms of convergence speed and computational time. Specifically, the adaptive strategy consistently exhibits the fastest convergence and the fewest iterations in most cases, while the Armijo step size method shows similar superiority in the majority of experiments. The performance of the algorithms is significantly influenced by different experimental parameters. Notably, as the number of rows and columns increases, the computational time and the number of iterations for all methods increase, but the adaptive and Armijo step sizes still perform better than the diminishing method. It was observed that the diminishing step size method experiences a significant slowdown in convergence speed as it approaches the stationary point. This indicates that while the method can initially reduce the objective function value quickly, its decreasing step size results in very small updates near the optimal solution, leading to a slower convergence rate. 
Although all step size strategies effectively converge for smaller problems, the choice of the appropriate step size becomes crucial for larger problems. These experiments suggest that in Stiefel manifold optimization, the adaptive and Armijo step size methods are generally more effective than the diminishing step size method, and their use is recommended in practical applications.

Through all the experiments, we observed that as the iteration point approaches the optimal solution, the descent slows down and requires more iterations to reach optimality. To address this issue, we might consider using a threshold or combining our approach with other methods to improve convergence.

\section{Conclusion and future works}
\label{sec: conclusion}
In this work, we proposed novel conditional gradient methods specifically designed for composite function optimization on Riemannian manifolds. Our investigation focused on Armijo, adaptive, and diminishing step-size strategies. We proved that the adaptive and diminishing step-size strategies achieve a convergence rate of \( \mathcal{O}(1/k) \), while the Armijo step-size strategy results in an iteration complexity of \( J + \mathcal{O}(1/\epsilon^2) \). In addition, under the assumption that the function \( g \) is Lipschitz continuous, we derived an iteration complexity of \( \mathcal{O}(1/\epsilon^2) \) for the Armijo step size.
Furthermore, we proposed a specialized algorithm to solve the subproblems in the presence of non-convexity, thereby broadening the applicability of our methods.
Our discussions and analyses are not confined to specific retraction and transport operations, enhancing the applicability of our methods to a broader range of manifold optimization problems. 

Also, from the results of the Stiefel manifold experiments, we observe that the convergence of the adaptive step size method tends to outperform the diminishing step size method. While the theoretical analysis previously focused on proving convergence rates through the diminishing step size, these results suggest that alternative step size strategies, such as adaptive methods, may lead to stronger performance. This opens the possibility of further exploration of different step size techniques to achieve better convergence rates in future studies.

Future work on manifold optimization could significantly benefit from the development and enhancement of accelerated conditional gradient methods. The momentum-guided Frank-Wolfe algorithm, as introduced by Li et al.~\cite{li2021momentum}, leverages Nesterov's accelerated gradient method to enhance convergence rates on constrained domains. This approach could be adapted to various manifold settings to exploit their geometric properties more effectively. Additionally, Zhang et al.~\cite{zhang2021accelerating} proposed accelerating the Frank-Wolfe algorithm through the use of weighted average gradients, presenting a promising direction for improving optimization efficiency on manifolds. Future research could focus on combining these acceleration techniques with Riemannian geometry principles to develop robust, scalable algorithms for complex manifold-constrained optimization problems. 
Another work for future research is to characterize the class of retractions that satisfy the Assumption~\ref{assumption convex}.
In addition to the exponential map, this assumption is also satisfied, for instance, if \( h \) is retraction-convex with respect to \( R \) in the sense of \cite[Definition 4.3.1]{huang2013optimization},
and \( y \) lies in a totally retractive neighborhood of $x$, where $R_x$ is a local diffeomorphism; the existence of such a neighborhood can be guaranteed by~\cite[Theorem 3.3.7]{flaherty2013riemannian}.

\section*{Acknowledgments} 
\noindent 
This work was supported by JST SPRING (JPMJSP2110), and Japan Society for the Promotion of Science, 
Grant-in-Aid for Scientific Research (C) (JP19K11840, JP25K15002).

\section*{Declarations} 
\textbf{Conflict of interest} 
Authors have no conflict of interest.


\bibliography{sn-bibliography}

\end{document}